
\documentclass{ws-ijnt} 

\usepackage{pdflscape}
\usepackage{caption}
\usepackage{cite}

\newtheorem{alg}[theorem]{ALGORITHM}

\DeclareMathOperator{\Pic}{Pic}
 \DeclareMathOperator{\Div}{Div}
 \DeclareMathOperator{\To}{T^0}
 
 \DeclareMathOperator{\Tr}{Tr}  
  \DeclareMathOperator{\co}{covol}  
  
  \DeclareMathOperator{\pol}{poly} 

\begin{document}

\markboth{Ha Thanh Nguyen Tran}
{Computing dimensions of spaces of Arakelov divisors of number fields}

%
\catchline{}{}{}{}{}
%

\title{COMPUTING DIMENSIONS OF SPACES OF ARAKELOV DIVISORS OF NUMBER FIELDS
}

\author{HA THANH NGUYEN TRAN}

\address{Department of Mathematics and Systems Analysis,\\ Aalto University School of Science,\\ Otakaari 1, 
Espoo 02150, Finland.\\
\email{hatran1104@gmail.com} }

\maketitle

\begin{history}
\received{(16 June 2015)}
\accepted{(19 February 2016)}
\end{history}

\begin{abstract}
The function $h^0$ for a number field is analogous to the dimension of the Riemann-Roch spaces at divisors on an algebraic curve. We provide a method to compute this function for number fields with unit group of rank at most 2, even with large discriminant. This method is based on using LLL-reduced bases, the ``jump algorithm" and Poisson summation formula.
\end{abstract}

\keywords{Arakelov; effectivity divisor; size function; Poisson summation formula; jump algorithm.}

\ccode{Mathematics Subject Classification 2010: 11Y16, 11Y40, 11H06, 11H55}

\section{Introduction}
As an analogue of the dimension of the Riemann-Roch spaces of divisors on an algebraic curve, van der Geer and Schoof introduced the function $h^0$ for a number field $F$ (see \cite{ref:3}). This function is also called the ``size function" for $F$ (see \cite{ref:14,ref:15,ref:27,ref:21,ref:38}). The properties and an upper bound for $h^0$ were provided in \cite{ref:27,ref:21,ref:3}.
 After that, in {\cite[Section 10.9]{ref:4}},  Schoof proposed a method to compute this function by using reduced Arakelov divisors. Essentially, we can approximate the value of $h^0$ at a given class of divisor $D$ by knowing the short vectors of the lattice $L$ associated to $D$ because the main contributions to $h^0(D)$ come from the shortest vectors of $L$ (see \cite{ref:15} and \cite{ref:3}). This can be efficiently computed if $L$ comes with a good, i.e., reasonably orthogonal, basis. 

Here we present a method to approximate the value of $h^0$ by using  the Poisson summation formula  as well as some ``good" divisors which can be obtained  by doing the ``jump algorithm" {\cite[Algorithm 10.8]{ref:4}} and from an LLL-reduction on $D$ (see Section \ref{sec:4} and \cite{ref:35,ref:33}). These divisors may not be reduced in the usual sense (see \cite{ref:4}) but they can be used to compute $h^0$ efficiently.

Let $F$ be a number field of degree $n$ and discriminant $\Delta_F$ with unit group of rank at most 2. We compute an approximate value of $h^0$ at any class of Arakelov divisors on $\Pic_F$. This method does not require a basis of the unit group of the ring of integers $O_F$ of $F$ and runs in polynomial time in $\log{|\Delta_F|}$ (see Section \ref{sec:runtime}).

In Section 2, we give a brief introduction to Arakelov divisors, the Arakelov class group and the group $\Pic_F$, the function $h^0$ for a number field and the Poisson summation formula for lattices. Then we discuss some results on  Arakelov divisors obtained from the LLL-algorithm in Section 3. Section 4 provides an algorithm to approximate the values of $h^0$ for number fields with large discriminant. Section 5 is devoted to bounding the error as well determining the running time of the algorithm. We also present some numerical examples applying this method to compute $h^0$ for real quadratic fields and real cubic fields in Section 6.

\section{Preliminaries}\label{pre}
This section briefly recalls some basic definitions that are used in the next sections. See \cite{ref:4, ref:3} for full details. 

Let $F$ be a number field of degree $n$ with the ring of integers $O_F$ and $r_1, r_2$ the number of real and complex infinite primes (or infinite places). Let $Cl_F$ be the class group of $F$. Denote by $F_{\mathbb{R}}: = F \otimes_\mathbb{Q}\mathbb{R} \simeq \prod_{\sigma \text{ real}}\mathbb{R} \times \prod_{\sigma \text{ complex}}\mathbb{C}$ where $\sigma$ runs through the infinite primes of $F$. 
 Then $F_{\mathbb{R}}$ is an \'{e}tale $\mathbb{R}$-algebra with the canonical Euclidean structure given by the scalar product\\
$\hspace*{3cm}\langle u, v \rangle := \Tr(u \overline{v}) $ \text{ for } $u = (u_{\sigma}), v = (v_{\sigma}) \in F_{\mathbb{R}}$.\\
In particular, in terms of coordinates, we have
 $$\|u\|^2= \Tr(u \overline{u})= \sum_{\sigma } \deg \sigma |u_{\sigma}|^2, \text{ for any } u = (u_{\sigma}) \in F_{\mathbb{R}}.$$
Here the degree of an infinite prime $\sigma$ is equal to 1 or 2 depending on whether $\sigma$ is real or complex.\\
The \textit{norm} of an element $u = (u_{\sigma})_{\sigma} $ of $F_{\mathbb{R}}$ is defined by 
$N(u):= \prod_{\sigma \text{ real} }u_{\sigma} \cdot \prod_{\sigma \text{ complex} } |u_{\sigma}|^2.$

\subsection{Arakelov divisors}\label{sec:ara}
Here and in the rest of the paper we often call fractional ideals
simply `ideals'. If we want to emphasize that an ideal is integral, we call it an integral ideal. 
 \begin{definition}\label{DefAra}
An \textit{Arakelov divisor} is a pair $D=(I,u)$ where $I$ is an ideal and $u$ is an arbitrary unit in $\prod_{\sigma}\mathbb{R}^*_{+} \subset F_{\mathbb{R}}$. The Arakelov divisors of $F$ form an additive group denoted by $\Div_F$. 
\end{definition}

Let $D = (I,u)$ be an Arakelov divisor. The \textit{degree} of $D$ is defined by $deg(D):= -\log{\left(N(u) N(I)\right)}$. We associate to $D$ the \textit{lattice} $ uI:= \{u f = (u_{\sigma} \cdot \sigma(f))_{\sigma}: f \in I \} \subset F_{\mathbb{R}} $ with the inherited  metric from $F_{\mathbb{R}}$ (see \cite{ref:0,ref:4}). For each $f \in I$,  by putting $\| f\|_D:= \| uf\|$, we obtain a  scalar product on $I$ that makes $I$ become an ideal lattice as well {\cite[Section 4]{ref:4}}.   
 The covolume of the lattice $L= u I$ associated to $D$ is $\co(L) = \sqrt{|\Delta_F|}\hspace*{0.1cm} N(I) N(u) = \sqrt{|\Delta_F|}\hspace*{0.1cm} e^{-deg(D)}$.

To each element $f \in F^*$ is attached a \textit{principal} Arakelov divisor $(f)=(f^{-1} O_F, |f|)$ where 
$f^{-1} O_F$ is the principal ideal generated by $f^{-1}$ and $|f| = (|\sigma(f)|)_{\sigma} \in \prod_{\sigma}\mathbb{R}^*_{+} \subset F_{\mathbb{R}}$. It has degree $0$ by the product formula.
\subsection{The group $\Pic_F$ and the Arakelov class group $\Pic_F^0$} 

\begin{definition}
The quotient of $\Div_F$ by its subgroup of principal Arakelov divisors is denoted by $\Pic_F$.
\end{definition}
The set of all Arakelov divisors of degree 0 form a subgroup of $\Div_F$, denoted by $\Div_F^0$. The Arakelov class group of a number field is analogous to the Picard group of an algebraic curve defined as follows.
\begin{definition}
The \textit{Arakelov class group}  $\Pic_F^0$ is the quotient of $\Div_F^0$ by its subgroup  of principal divisors.
\end{definition}
 
Each $v=(v_{\sigma})  \in \oplus_{\sigma}\mathbb{R}$ can be embedded into $\Div_F$ as the divisor $D_v= (O_F, u)$ with $u = (e^{-v_{\sigma}})_{\sigma}$.

Put 
$(\oplus_{\sigma}\mathbb{R})^0 = \{ (v_{\sigma}) \in \oplus_{\sigma}\mathbb{R}: deg(D_v) = 0 \}$ and $\Lambda = \{(\log|\sigma(\varepsilon)|)_{\sigma}: \varepsilon \in O_F^*\}$. Then $\Lambda$ is a lattice contained in  the vector space $(\oplus_{\sigma}\mathbb{R})^0$. We define
$$\To = (\oplus_{\sigma}\mathbb{R})^0 / \Lambda.$$
By Dirichlet's unit theorem, $\To$ is a compact real torus of dimension $r_1+r_2-1$ {\cite[Section 4.9]{ref:11}}. 
The structure of $\Pic_F^0$ is described by the following proposition.
\begin{proposition}\label{prop:structure1}
The map that sends the class of a divisor  $(I, u)$ to the  class of the ideal $I$  is a homomorphism 
from $ \Pic_F^0$ to the class group $Cl_F$ of $F$. 
It induces the exact sequence 
\begin{align*}
 0 \longrightarrow \To \longrightarrow \Pic_F^0 \longrightarrow Cl_F \longrightarrow 0  .
\end{align*}
\end{proposition}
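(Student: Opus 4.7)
The plan is to show that $\phi : \Pic_F^0 \to Cl_F$ defined by $[(I,u)] \mapsto [I]$ is a well-defined surjective homomorphism with kernel $\To$. Well-definedness follows because a principal Arakelov divisor $(f) = (f^{-1}O_F, |f|)$ has first coordinate the principal ideal $f^{-1}O_F$, which is trivial in $Cl_F$. The homomorphism property is immediate from the componentwise group law on $\Div_F$: the ideals multiply and the positive units multiply, matching the group structure on $Cl_F$. For surjectivity, given any ideal class $[I]$, the constant unit $u=(t,\dots,t)$ with $t = N(I)^{-1/n}$ satisfies $N(u)N(I) = 1$, so $(I,u) \in \Div_F^0$ and $\phi([(I,u)]) = [I]$.

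The main step is identifying $\ker \phi$ with $\To$. If $\phi([(I,u)]) = [O_F]$, then $I = f^{-1}O_F$ for some $f \in F^*$, and subtracting the principal divisor $(f)$ produces a representative of the form $(O_F, u')$ for the same class. Writing $u'_\sigma = e^{-v_\sigma}$ for $v \in \oplus_\sigma \mathbb{R}$ gives a group homomorphism $\psi : (\oplus_\sigma \mathbb{R})^0 \to \ker \phi$ sending $v \mapsto [D_v]$, which is surjective by the reduction just made; the degree-zero condition $N(u')=1$ translates exactly to $\sum_\sigma (\deg \sigma) v_\sigma = 0$, so the domain is indeed $(\oplus_\sigma \mathbb{R})^0$.

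It remains to compute $\ker \psi$. A vector $v$ lies in $\ker \psi$ if and only if $D_v = (O_F, (e^{-v_\sigma})_\sigma)$ is principal; writing $D_v = (f)$ forces $f^{-1}O_F = O_F$, so $f$ is a unit $\varepsilon \in O_F^*$, and matching the second coordinates gives $v_\sigma = -\log|\sigma(\varepsilon)|$. Since $\Lambda$ is closed under negation (because $\varepsilon^{-1} \in O_F^*$), this says $v \in \Lambda$, and the converse is immediate from the definition of $\Lambda$. Hence $\ker \phi \cong (\oplus_\sigma \mathbb{R})^0 / \Lambda = \To$, yielding the stated exact sequence. The only mildly delicate point is keeping the two coordinates of an Arakelov divisor in sync throughout the reduction to $I = O_F$ and matching signs in the parametrization $u_\sigma = e^{-v_\sigma}$; once the conventions are fixed, the verification is formal.
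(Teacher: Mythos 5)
Your proof is correct and gives the standard direct verification: well-definedness because principal Arakelov divisors have principal ideal parts, surjectivity by scaling the infinite component to force degree zero, and the kernel identified with $\To$ by reducing to $(O_F,u')$, parametrizing $u'_\sigma=e^{-v_\sigma}$, observing that $\deg=0$ is exactly $v\in(\oplus_\sigma\mathbb{R})^0$, and then checking that $(O_F,(e^{-v_\sigma})_\sigma)$ is principal iff $v\in\Lambda$. The paper itself supplies no argument here; it simply cites Schoof's Proposition~2.2 in \cite{ref:4}, and what you have written is essentially that proof, so there is nothing substantive to reconcile.

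One small presentational point: when you reduce to $I=O_F$, the principal divisor you must subtract to kill $I=gO_F$ is $(g^{-1})=(gO_F,|g|^{-1})$, not $(g)$; with the sign convention $(f)=(f^{-1}O_F,|f|)$ this gives $(I,u)-(g^{-1})=(O_F,u|g|)$ as desired. You flagged the sign-tracking yourself, and it indeed works out, but it is worth writing the exact cancellation so the reader does not have to re-derive the convention.
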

\begin{proof}
 See {\cite[Proposition 2.2]{ref:4}}.
\end{proof}
\subsection{Metric on the Arakelov class group}\label{sec:metric}

For $u \in \prod_{\sigma}\mathbb{R}^*_{+} $, we let $\log{u}$ denote the element 
$\log u: = (\log(u_{\sigma}))_{\sigma} \in \prod_{\sigma}\mathbb{R} \subset F_\mathbb{R }.$
By using the scalar product from $F_\mathbb{R }$, this vector has length
$\|\log u\|^2 = \sum_{\sigma } deg(\sigma)\log^2(u_{\sigma}).$
We define
\begin{equation*}
  \|u\|_{Pic}: =
  \min_{\substack{u' \in \prod_{\sigma}{\mathbb{R}_{>0}}\\
                  \log u' \equiv  \log u (\text{ mod } \Lambda)}}
        \|\log u'\| = \min_{\varepsilon \in O_F^*}    \|\log(|\varepsilon| u)\|. 
\end{equation*}

Now let $[D]$ and $[D']$ be two classes containing divisor $D$ and $D'$ respectively lying on the same connected component of $\Pic_F^0$. Then by Proposition \ref{prop:structure1}, there is some unique  $ u \in \To$ such that $D - D' = (O_F, u)$. We define the \textit{distance} between two divisor classes containing $D$ and $D'$ to be $\|u\|_{Pic}$.
The function $\|  \hspace{0.3cm}\|_{Pic}$ gives rise to a distance function that induces the natural topology of $\Pic_F^0$.
See Section 6 in \cite{ref:4} for more details.
\subsection{The function $h^0$ for a number field}
 Let $D=(I,u)$ be an Arakelov divisor of $F$. The \textit{effectivity} $e(D)$ of $D$ is the number defined by
 \[ 
 e(D) = \begin{cases}
 0 & \text{ if }  O_F \nsubseteq I\\
 e^{-\pi\|1\|^2_{D}} & \text{ if } O_F \subseteq I.
 \end{cases}
 \]
 This number is between $0$ and $1$. A divisor $D$ is call \textit{effective} if $e(D)>0$. Similar to a Riemann-Roch space of an algebraic curve, we denote by $H^0(D)$ the union of $\{0\}$ and the set of all elements $f$ of $F$ for which the divisor $(f) + D$ is effective, i.e.,
$$ H^0(D): = \{f \in F^* : e((f) + D) > 0\} \cup \{0\}.$$

Since $e((f) + D) >0$ if and only if $ f \in I\setminus \{0\}$, the set $H^0(D)$ is equal to the infinite group $I$. In order to measure its size, we weight each element $f \in I$ with the effectivity of the Arakelov divisor  $(f)+D$:
$$e((f)+D)=e^{-\pi\|1\|_{(f)+D}^2}=e^{-\pi\|f\|_D^2}.$$
 The value of the function $h^0$ at $D$ is obtained by summing up these terms for all $f$ in $I$ including $0$ and then taking its logarithm as follows.
 $$ h^0(D)=\log{\left(\sum_{f \in I}e^{-\pi\|f\|^2_{D}}\right)}=\log{\left(\sum_{f \in I}e^{-\pi\|f u\|^2}\right)}.$$
 Since two Arakelov divisors in the same class in $\Pic_F$ have isometric associated lattices, the function $h^0 (D)$ only depends on the class $[D]$ of $D$ in $\Pic_F$ and we may write $h^0([D])$. In other words, $h^0$ is well defined on $\Pic_F$. See \cite{ref:3} for more details.
\subsection{The Poisson summation formula for lattices}\label{Poisson}
Let $L$ be a lattice  in $\mathbb{R} ^n$ and $L=L_1 \oplus L_2$. Note that we do not require that $L_1$ and $L_2$ are orthogonal to one another.

Consider the sum 
$$S = \sum_{\mathsf{z} \in L} e^{-\pi \|\mathsf{z}\|^2}.$$
Let $V$ be the subspace $L_1 \otimes \mathbb{R}$ of $\mathbb{R}^n$ and let $\pi(\mathsf{b})$ denote the orthogonal projection onto $V$ of a vector $\mathsf{b} \in \mathbb{R}^n$. We have the following.

\begin{lemma}\label{poi1}
Let $\gamma$ be the covolume of the lattice $L_1$ inside $V$. Then 
$$S = \frac{1}{\gamma}\sum_{\mathsf{b} \in L_2}e^{-\pi \|\mathsf{b}-\pi(\mathsf{b})\|^2}\sum_{\mathsf{a} \in L_1^{\vee}} e^{-\pi \|\mathsf{a}\|^2 -2 \pi i \langle \mathsf{a} , \pi(\mathsf{b})\rangle } .$$
\end{lemma}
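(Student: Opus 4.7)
The plan is to carry out a two-stage reduction: first use the direct-sum decomposition of $L$ as an abelian group to split $S$ into an outer sum over $L_2$ and an inner sum over $L_1$, then apply the classical Poisson summation formula in the subspace $V$ to the inner sum only. Concretely, I would write each $\mathsf{z} \in L$ uniquely as $\mathsf{z} = \mathsf{x} + \mathsf{b}$ with $\mathsf{x} \in L_1$ and $\mathsf{b} \in L_2$, which is allowed because $L = L_1 \oplus L_2$. This converts $S$ into $\sum_{\mathsf{b}\in L_2}\sum_{\mathsf{x}\in L_1} e^{-\pi\|\mathsf{x}+\mathsf{b}\|^2}$.

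The next step is the one that requires care, since $L_1$ and $L_2$ are not assumed to be orthogonal. Decompose $\mathsf{b} = \pi(\mathsf{b}) + (\mathsf{b}-\pi(\mathsf{b}))$ with $\pi(\mathsf{b}) \in V$ and $\mathsf{b} - \pi(\mathsf{b}) \in V^{\perp}$. Since $\mathsf{x} \in L_1 \subset V$, the vector $\mathsf{x} + \pi(\mathsf{b})$ lies in $V$ and is therefore orthogonal to $\mathsf{b}-\pi(\mathsf{b})$. By the Pythagorean identity,
$$\|\mathsf{x}+\mathsf{b}\|^2 = \|\mathsf{x}+\pi(\mathsf{b})\|^2 + \|\mathsf{b}-\pi(\mathsf{b})\|^2,$$
so the factor $e^{-\pi\|\mathsf{b}-\pi(\mathsf{b})\|^2}$ can be pulled out of the inner sum, yielding
$$S = \sum_{\mathsf{b}\in L_2} e^{-\pi\|\mathsf{b}-\pi(\mathsf{b})\|^2} \sum_{\mathsf{x}\in L_1} e^{-\pi\|\mathsf{x}+\pi(\mathsf{b})\|^2}.$$

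Finally I would apply the standard Poisson summation formula inside $V$ to the inner sum. Viewing $L_1$ as a full lattice in $V$ with covolume $\gamma$ and dual $L_1^{\vee}$, and using that the Gaussian $f(\mathsf{y}) = e^{-\pi\|\mathsf{y}\|^2}$ is its own Fourier transform on $V$, the shift identity $\widehat{f(\cdot + \mathsf{w})}(\mathsf{a}) = e^{2\pi i \langle \mathsf{a},\mathsf{w}\rangle}\widehat{f}(\mathsf{a})$ combined with the evenness of the Gaussian (replace $\mathsf{x}$ by $-\mathsf{x}$) gives
$$\sum_{\mathsf{x}\in L_1} e^{-\pi\|\mathsf{x}+\pi(\mathsf{b})\|^2} = \frac{1}{\gamma}\sum_{\mathsf{a}\in L_1^{\vee}} e^{-\pi\|\mathsf{a}\|^2 - 2\pi i\langle \mathsf{a},\pi(\mathsf{b})\rangle}.$$
Substituting this into the previous display produces the claimed identity. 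The only real obstacle is the orthogonality bookkeeping in the middle step; once one notices that it is precisely the orthogonal projection $\pi(\mathsf{b})$ (rather than $\mathsf{b}$ itself) that combines with $L_1$ inside $V$, the rest is a direct invocation of Poisson summation on $V$.
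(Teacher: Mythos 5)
Your proof follows the paper's argument exactly: decompose $\mathsf{z}=\mathsf{x}+\mathsf{b}$ via $L=L_1\oplus L_2$, use the Pythagorean identity to factor out $e^{-\pi\|\mathsf{b}-\pi(\mathsf{b})\|^2}$, and then apply Poisson summation in $V$ to the inner sum over $L_1$. The only difference is that you spell out the Fourier computation (the modulation rule and evenness of the Gaussian/lattice to fix the sign of the phase) where the paper simply cites the formula; the substance is the same and the argument is correct.
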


\begin{proof}
Let $\mathsf{z} \in L$. Then $\mathsf{z} = \mathsf{a} + \mathsf{b} $ for some $\mathsf{a} \in L_1$ and $\mathsf{b} \in L_2$. The vectors $\mathsf{b} - \pi(\mathsf{b})$ and $ \mathsf{a} + \pi(\mathsf{b})$ are  orthogonal. The Pythagorean Theorem implies therefore
$$\|\mathsf{a} +\mathsf{b}\|^2 = \|\mathsf{b}-\pi(\mathsf{b})\|^2 + \|\mathsf{a} + \pi(\mathsf{b})\|^2.$$

Thus, we can write
$$S= \sum_{\mathsf{b} \in L_2}\sum_{\mathsf{a} \in L_1} e^{-\pi \|\mathsf{a}+\mathsf{b}\|^2}
 =\sum_{\mathsf{b} \in L_2}e^{-\pi \|\mathsf{b}-\pi(\mathsf{b})\|^2}\sum_{\mathsf{a} \in L_1} e^{-\pi \|\mathsf{a} + \pi(\mathsf{b})\|^2}.$$

Applying the Poisson summation formula to the second sum (see \cite{ref:37} and Chapter 7 in \cite {ref:29}), we obtain that

$$S = \frac{1}{\gamma}\sum_{\mathsf{b} \in L_2}e^{-\pi \|\mathsf{b}-\pi(\mathsf{b})\|^2}\sum_{\mathsf{a} \in L_1^{\vee}} e^{-\pi \|\mathsf{a}\|^2 -2 \pi i \langle \mathsf{a} , \pi(\mathsf{b})\rangle } .$$
\end{proof}

\section{Some Results}\label{result}
In this section, we discuss ``nice" properties of Arakelov divisors obtained from the LLL-algorithm. 

From now on, we put 
$$\left(\prod_{\sigma} \mathbb{R}_{+}^*\right)^0 = \left\{ s   \in \prod_{\sigma} \mathbb{R}_{+}^*: N(s) = 1\right\} $$
$$\text{ and} \hspace*{1.2cm} \partial_F = \left(\frac{2}{\pi}\right)^{r_2}\sqrt{|\Delta_F|} \text{  and  } \mathcal{D}_F =  \left(2^{(n-1)/2} \sqrt{n}\right)^n \partial_F.\hspace*{0.5cm} $$ 

 \begin{definition}
 Let $J$ be an ideal of $F$. Then \textit{the Arakelov divisor associated to} $J$ is $d(J):=(J, u)$ where $u = (u_{\sigma}) \in \prod_{\sigma}\mathbb{R}^*_{+}$ and $u_{\sigma} = N(J)^{-1/n}$ for all $\sigma$.	
 \end{definition}
 
 Let $D =(I,u)$ be an Arakelov divisor and let $L = uI$ be the lattice associated to $D$. Assume that a basis of $L$ is given. By using the LLL-algorithm, we can find an LLL-reduced basis $\{b_1, ..., b_n\}$ of $L$. Since $b_1 \in L =u I$, there is some nonzero element $f \in I$ such that $b_1 = u\cdot f$. Denote by 
 $$J = b_1^{-1} L = u^{-1} f^{-1} (uI) = f^{-1} I.$$
 Then $J$ is an ideal of $F$. Therefore, we can define as follows.
\begin{definition}
Let $D =(I,u)$ be an Arakelov divisor and let $L = uI$ be the lattice associated to $D$ with a known basis. We call \textit{an LLL-reduction on $D$} the process of finding an LLL-reduced basis $\{ b_1, ..., b_n\} $ of $L$, then computing a new ideal lattice $J = b_1^{-1}L $ and a new divisor $D' = d(J)$.
\end{definition}

We first recall the following lemma {\cite[Proposition 4.4]{ref:4}}.
\begin{lemma}\label{lem:min}
Let 
$D = (I,u)$ be a divisor of degree $0$. Then there is a nonzero element $f \in I$ such that 
$ u_{\sigma} |\sigma(f)| \leq \partial_F^{1/n} \text{ for all } \sigma$. In particular $\| u f\| \leq \sqrt{n}\hspace*{0.1cm} \partial_F^{1/n}$.
\end{lemma}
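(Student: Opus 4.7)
The plan is to deduce the lemma from Minkowski's convex body theorem applied to the lattice $L = uI \subset F_{\mathbb{R}}$ and a carefully scaled box region. The guiding idea is that the condition $u_\sigma|\sigma(f)| \leq c$ is precisely the statement that $uf$ lies in the symmetric convex body $B_c := \{y \in F_{\mathbb{R}} : |y_\sigma| \leq c \text{ for all } \sigma\}$, so it suffices to choose $c = \partial_F^{1/n}$ and verify Minkowski's volume inequality.

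First I would identify $F_{\mathbb{R}}$ with $\mathbb{R}^n$ in the usual way (a real place contributing one coordinate, a complex place contributing two real coordinates $(\mathrm{Re},\mathrm{Im})$) and compute all volumes with respect to the standard Lebesgue measure. The Lebesgue volume of $B_c$ is then $(2c)^{r_1}(\pi c^2)^{r_2} = 2^{r_1}\pi^{r_2} c^n$. On the other hand, the lattice $L = uI$ has covolume $\sqrt{|\Delta_F|}\,N(I)N(u)$ with respect to the trace inner product, which by the hypothesis $\deg(D) = 0$ equals $\sqrt{|\Delta_F|}$. Since the trace inner product inflates the standard Lebesgue measure by a factor of $2$ at each complex place, the standard Lebesgue covolume of $L$ is $2^{-r_2}\sqrt{|\Delta_F|}$.

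Next I would plug these numbers into Minkowski's convex body theorem. The condition $\mathrm{vol}(B_c) \geq 2^n \cdot \mathrm{covol}(L)$ becomes
\begin{equation*}
2^{r_1}\pi^{r_2} c^n \;\geq\; 2^n \cdot 2^{-r_2}\sqrt{|\Delta_F|} \;=\; 2^{r_1+r_2}\sqrt{|\Delta_F|},
\end{equation*}
which simplifies (using $n = r_1 + 2r_2$) to $c^n \geq (2/\pi)^{r_2}\sqrt{|\Delta_F|} = \partial_F$. Taking $c = \partial_F^{1/n}$ makes this an equality, so Minkowski's theorem guarantees a nonzero point $uf \in L \cap B_c$, that is, a nonzero $f \in I$ with $u_\sigma|\sigma(f)| \leq \partial_F^{1/n}$ for every $\sigma$, as required.

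Finally, for the bound on $\|uf\|$, I would simply use the coordinate expression of the norm recalled in Section~\ref{pre}:
\begin{equation*}
\|uf\|^2 \;=\; \sum_\sigma \deg(\sigma)\,|u_\sigma \sigma(f)|^2 \;\leq\; \Bigl(\sum_\sigma \deg(\sigma)\Bigr)\,\partial_F^{2/n} \;=\; n\,\partial_F^{2/n},
\end{equation*}
giving $\|uf\| \leq \sqrt{n}\,\partial_F^{1/n}$. The only mildly delicate step is the bookkeeping between the trace-based covolume formula stated in Section~\ref{sec:ara} and the standard Lebesgue measure needed for Minkowski's theorem, specifically tracking the factor of $2^{r_2}$ introduced at complex places; everything else is a routine application of Minkowski.
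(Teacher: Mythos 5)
Your proposal is correct and is essentially the standard argument: the paper simply cites Schoof's Proposition~4.4, whose proof is exactly this application of Minkowski's convex body theorem to the symmetric compact box $B_c$ in $F_{\mathbb{R}}$, with the same $2^{r_2}$ bookkeeping between the trace-form covolume and Lebesgue volume. One small remark worth making explicit: since $c=\partial_F^{1/n}$ gives equality in the volume condition, you need the compact (closed-body) form of Minkowski's theorem, which applies here because $B_c$ is closed and bounded.
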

\begin{proof}
See {\cite[Proposition 4.4]{ref:4}}.
\end{proof}

We prove the proposition below.

\begin{proposition}\label{pro:close} 
Let $D=(I,u)$ be an Arakelov divisor of degree $0$ and $D' = d(J)$ obtained  by an LLL-reduction on $D$.
Then we have the following.
\begin{itemize}
\item [i)] The ideal $J^{-1}$ is integral with $N(J^{-1}) \le 2^{n(n-1)/2} \partial_F$.\\ 
Moreover, we obtain that $ 2^{-n(n-1)/2}  \left(\frac{2}{\pi}\right)^{-r_2} \le \co(J) \le \sqrt{|\Delta_F|}$.
\item[ii)] There is some $s \in \left(\prod_{\sigma} \mathbb{R}_{+}^*\right)^0$ and some $f \in I$ such that 
$$ D - D' + (f) = (O_F, s)$$
 and $$\|D - D'\|_{Pic} =\|s\|_{Pic} < \log{\mathcal{D}_F}.$$

\end{itemize} 
\end{proposition}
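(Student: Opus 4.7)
\textbf{Part (i)} follows from LLL combined with AM-GM. Since $b_1 = u f_0$ for some $f_0 \in I \setminus \{0\}$, the ideal $J^{-1} = f_0 I^{-1}$ lies in $I \cdot I^{-1} = O_F$, which gives integrality. The degree-zero hypothesis $N(u) N(I) = 1$ yields $N(J^{-1}) = N(u)|N(f_0)| = |N(b_1)|$. Applying the arithmetic-geometric-mean inequality componentwise gives $|N(b_1)| \le (\|b_1\|/\sqrt n)^n$, and combining the LLL estimate $\|b_1\| \le 2^{(n-1)/2}\lambda_1(L)$ with Lemma~\ref{lem:min} applied to $D$, which furnishes $\lambda_1(L) \le \sqrt n\,\partial_F^{1/n}$, one obtains $N(J^{-1}) \le 2^{n(n-1)/2}\partial_F$. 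The covolume bounds then follow from $\co(J) = \sqrt{|\Delta_F|}/N(J^{-1})$ together with $1 \le N(J^{-1}) \le 2^{n(n-1)/2}\partial_F$.

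For \textbf{part (ii)}, I take $f := f_0$ and set $s := u v^{-1}|f_0|$, so that $s_\sigma = |b_{1,\sigma}|\,N(J)^{1/n}$. Since $I J^{-1} = f_0 O_F$, a direct ideal computation gives $D - D' + (f) = (O_F, s)$, and $N(s) = |N(b_1)|\,N(J) = N(J^{-1})N(J) = 1$, so $s \in (\prod_\sigma\mathbb{R}_+^*)^0$. Any other $f'$ satisfying the same identity must differ from $f_0$ by a unit in $O_F^*$, so the class of $s$ in $\To$ is canonical and $\|D - D'\|_{Pic} = \|s\|_{Pic}$ is well-defined.

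The main obstacle is the bound $\|s\|_{Pic} < \log \mathcal{D}_F$. I plan to use the crude estimate $\|s\|_{Pic} \le \|\log s\|$ (the $\varepsilon = 1$ term in the definition of $\|\cdot\|_{Pic}$). The Pythagorean decomposition $\log s = \log|b_1| - (1/n)\log N(J^{-1})\cdot \mathbf{1}$, with $\mathbf{1}$ orthogonal to the hyperplane $(\oplus_\sigma\mathbb{R})^0$, yields
\[
\|\log s\|^2 = \|\log|b_1|\|^2 - (\log N(J^{-1}))^2/n \le \|\log|b_1|\|^2.
\]
The key inequality $\sum_\sigma\deg(\sigma)\log^2|b_{1,\sigma}| \le n(n-1)\log^2\|b_1\|$ comes from a Lagrange-multiplier optimization: the two constraints $\sum_\sigma\deg(\sigma)|b_{1,\sigma}|^2 = \|b_1\|^2$ and $\prod_\sigma|b_{1,\sigma}|^{\deg(\sigma)} = N(J^{-1}) \ge 1$ force the extremal configuration to have all but one weighted component of $|b_1|$ near the upper bound $\|b_1\|$, while the remaining one is pushed down to roughly $\|b_1\|^{-(n-1)}$ to satisfy the product constraint. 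Combined with $\|b_1\|^n \le \mathcal{D}_F$, this gives $\|\log s\|^2 \le ((n-1)/n)\log^2\mathcal{D}_F$ and hence $\|s\|_{Pic} \le \sqrt{(n-1)/n}\,\log\mathcal{D}_F < \log\mathcal{D}_F$, as required.
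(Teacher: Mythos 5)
Your Part (i) is correct and takes a genuinely different, more direct route than the paper. Where the paper applies Lemma~\ref{lem:min} a second time (to the divisor $d(J)$) to produce an auxiliary $g\in J$ and then compares $\|b_1\|$ with $\|b_1 g\|$ via the LLL inequality, you observe that $N(J^{-1})=|N(b_1)|$, use the arithmetic--geometric-mean inequality $|N(b_1)|\le(\|b_1\|/\sqrt n)^n$, and combine it with $\|b_1\|\le 2^{(n-1)/2}\sqrt n\,\partial_F^{1/n}$ (LLL plus Lemma~\ref{lem:min} on $D$ itself). That yields the same bound with fewer moving parts and makes the role of $\partial_F$ transparent.

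Your Part (ii) has the right identity $D-D'+(f)=(O_F,s)$ with $s=|b_1|N(J)^{1/n}$, the correct projection formula $\|\log s\|^2=\|\log|b_1|\|^2-c^2/n$ where $c=\log N(J^{-1})\ge 0$, and the correct LLL bound $\|b_1\|\le\mathcal D_F^{1/n}$. The gap is the ``key inequality'' $\sum_\sigma\deg(\sigma)\log^2|b_{1,\sigma}|\le n(n-1)\log^2\|b_1\|$: it is in fact true, but the Lagrange-multiplier sketch is not a proof. The difficulty is that $\sum_\sigma\deg(\sigma)\log|b_{1,\sigma}|=c$ is nonnegative but not zero, so the standard ``zero-sum plus upper bound'' lemma does not apply directly to $\log|b_1|$; making your claim rigorous requires additionally checking $c\le 2(n-1)\log\|b_1\|$, which you neither state nor verify. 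The detour is also unnecessary. Since $c\ge 0$, one has $\log s_\sigma=\log|b_{1,\sigma}|-c/n\le\log|b_{1,\sigma}|\le\log\|b_1\|\le\tfrac1n\log\mathcal D_F$, and now $\log s$ genuinely satisfies $\sum_\sigma\deg(\sigma)\log s_\sigma=0$; the standard lemma (the paper's cite to Lemma~7.5 of Schoof) then gives $\|\log s\|^2\le n(n-1)\bigl(\tfrac1n\log\mathcal D_F\bigr)^2<\log^2\mathcal D_F$ directly. So: keep your Part (i); in Part (ii) drop the bound on $\|\log|b_1|\|^2$ and bound $\log s_\sigma$ itself, which is exactly what the paper does.
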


\begin{proof}
i) Since $D'$ is obtained from an LLL-reduction on $D$, there is an LLL-reduced basis $\{ b_1, ..., b_n\} $ of the lattice $uI$ associated to $D$ such that $b_1 = u f$ and $J =  f^{-1} I $ for some $f \in I$. As $f \in I$, the ideal $J$ contains $1$. Thus $J^{-1}$ is integral.
	
By Lemma \ref{lem:min}, there is a nonzero element $g \in J$ such that
$$ N(J)^{-1/n} |\sigma(g)| \leq \partial_F^{1/n} \text{ for all } \sigma.$$
Hence
\begin{equation} \label{eq1}
N(J)^{-1/n} \max_{\sigma} |\sigma(g)|\leq  \partial_F ^{1/n}.\hspace*{5cm}
\end{equation} 
We have 
$$\|u (f g) \| = \|(uf) g\|=  \|b_1 g\|\leq \max_{\sigma} |\sigma(g)|  \|b_1\|.$$
Furthermore, $\|b_1\| \leq 2^{(n-1)/2} \|u (f g) \| $ since $u (f g) \in u I$ and by the property of LLL-reduced bases {\cite[Section 10]{ref:1}}. As a consequence,
\begin{equation} \label{eq2}
\max_{\sigma} |\sigma(g)|  \ge \frac{\|u (f g) \|}{\|b_1\|} \ge 2^{-(n-1)/2}.
\end{equation} 
The inequalities \eqref{eq1} and \eqref{eq2} imply that
 $$ N(J^{-1})  \leq \frac{\partial_F}{(\max_{\sigma} |\sigma(g)| )^n } \le 2^{n(n-1)/2} \partial_F.$$

Therefore the first statement in i) is proved. Since $\co(J) = \sqrt{|\Delta_F|} N(J)$, the second statement in i) follows.

ii) 
The divisor $D = (I, u)$ is of degree $0$,  by Lemma \ref{lem:min}, there is a nonzero element $f'$ in $I$ such that 
$\|u f'\|  \leq \sqrt{n} \hspace*{0.1cm}\partial_F^{1/n}.$ 
Since $b_1$ is the first vector in an LLL-reduced basis of the lattice $u I$, we again have $\| b_1\| \leq 2^{(n-1)/2} \| u f'\| $ {\cite[Section 10]{ref:1}}. It follows that $ \|b_1\| \leq \sqrt{n}\hspace*{0.1cm} 2^{(n-1)/2} \partial_F^{1/n} = \mathcal{D}_F^{1/n}$.

 Let $ s = u |f| N(J)^{1/n} = |b_1| N(J)^{1/n}$. Then with the notation in i), we have that 
  $ D- D' + (f) = (O_F, s)$. 
In particular, $D-D'= (O_F, s) \in \Pic_F^0$. By Section \ref{sec:metric}, this leads to the following.  
$$\|D- D'\|_{Pic} =  \|s\|_{Pic}.$$ 

 Part i) shows that $J^{-1}$ is integral, so $N(J) \leq 1$. Therefore, the following inequality holds.
 $$s_{\sigma}\leq  \|s\|=  \|b_1\|  N(J)^{1/n} \leq \mathcal{D}_F^{1/n}  \text{ for all  } \sigma.$$
This leads to  $\log(s_{\sigma}) \le  \frac{1}{n}\log{\mathcal{D}_F}$ for all $\sigma$. Since $\sum_{\sigma}\deg(\sigma)\log{s_{\sigma}} =0 $, we can  easily prove the following {\cite[Lemma 7.5]{ref:4}}.
$$\| \log{s}\|^2 = \sum_{\sigma  }  \deg(\sigma)|\log{s_{\sigma}}|^2  \leq n(n-1) \left(\frac{1}{n}\log{\mathcal{D}_F} \right)^2 < \log^2{\mathcal{D}_F}.$$
Since $\|D - D'\|_{Pic} =\|s\|_{Pic} \leq \| \log{s}\|$, part ii) is proved.

\end{proof}

\begin{definition}
Let $W = (I,v)$ be an Arakelov divisor of degree $d$. We call $D=(I, u )$ with $u = e^{d/n} v$ the \textit{divisor translated from} $W$. 
\end{definition}
Note that if $D$ is translated from a divisor $W$ then $deg(D)=0$. In other words, the class of the divisor $D$ is in $\Pic_F^0$.

We prove the following corollary.

\begin{corollary}\label{pro:norm} 
Let $W = (I,v)$ be an Arakelov divisor of degree $d$ and let $D$ be the divisor translated from $W$. 
Assume that $D' = d(J)$ is a divisor obtained by an LLL-reduction on $D$. Then $L =  e^{-d/n}  N(J)^{-1/n}  s J$ is the lattice associated to $W$ for some $s \in \left(\prod_{\sigma} \mathbb{R}_{+}^*\right)^0$ and $\|s\|_{Pic} < \log{\mathcal{D}_F}$. 
\end{corollary}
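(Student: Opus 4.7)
The strategy is to apply Proposition \ref{pro:close}(ii) to the translated divisor $D$, which has degree $0$ by construction, and then read off what the resulting Arakelov identity says for the underlying lattice in $F_{\mathbb{R}}$. Since $W=(I,v)$ has degree $d$, the product formula gives $N(v)N(I)=e^{-d}$, and with $u=e^{d/n}v$ one gets $N(u)N(I)=1$, so $\deg(D)=0$. Proposition \ref{pro:close}(ii) therefore produces $s\in\left(\prod_{\sigma}\mathbb{R}^{*}_{+}\right)^{0}$ with $\|s\|_{Pic}<\log\mathcal{D}_{F}$ and $f\in I$ such that $D-D'+(f)=(O_{F},s)$, which already supplies the element $s$ and its bound promised in the corollary.

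Rewriting the identity as $D+(f)=D'+(O_{F},s)$ and unpacking each side gives: on the left, $(If^{-1},u|f|)=(J,u|f|)$, using $J=f^{-1}I$ from the definition of an LLL-reduction on $D$; on the right, $(J,N(J)^{-1/n}s)$, using the definition of $d(J)$. Matching the archimedean parts yields the scalar identity
\[
u|f|=N(J)^{-1/n}\,s
\]
in $\prod_{\sigma}\mathbb{R}^{*}_{+}\subset F_{\mathbb{R}}$.

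To conclude, note that the lattice associated to $W$ is $L=vI$. Using $v=e^{-d/n}u$ and $I=fJ$, componentwise multiplication in $F_{\mathbb{R}}$ gives $vI=e^{-d/n}(uf)J$. The factors $uf$ and $u|f|$ differ only by the unit-modulus phase $f/|f|\in F_{\mathbb{R}}$, which acts as a componentwise isometry of $F_{\mathbb{R}}$; hence $(uf)J$ and $(u|f|)J$ define the same ideal lattice for the purposes of $h^{0}$. Substituting $u|f|=N(J)^{-1/n}s$ then gives $L=e^{-d/n}N(J)^{-1/n}sJ$, as claimed.

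The only subtle point is this last identification, which just says that the ideal lattice attached to an Arakelov divisor $(J,\,\cdot\,)$ is insensitive to the componentwise $U(1)$-twist at each infinite place; once this is granted, the corollary is a direct translation of Proposition \ref{pro:close}(ii) through the change of variables $W\mapsto D$.
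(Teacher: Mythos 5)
Your proof is correct and follows essentially the same route as the paper: apply Proposition \ref{pro:close}(ii) to the degree-zero translate $D$, obtain $D + (f) = D' + (O_F,s)$, and unpack to read off $L$. You are somewhat more explicit than the paper about the $f/|f|$ phase twist and why it is harmless — the paper glosses over this by working class-wise in $\Pic_F$ and invoking that divisors in the same class have isometric lattices — but the substance is the same.
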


\begin{proof}
 By Proposition \ref{pro:close}, there exists some $s \in \left(\prod_{\sigma} \mathbb{R}_{+}^*\right)^0$ for which $\|s\|_{Pic} < \log{\mathcal{D}_F}$ and $f \in I$ such that   $D +(f) = D' + (O_F,s) $ in $\Div_F^0$. Then so  $D  = D' + (O_F,s) = (J,  N(J)^{-1/n} s)$ in $\Pic_F^0$.\\ 
Since $W =D +(O_F, e^{-d/n} ) \in \Pic_F$, we have the following.
$$W = D + (O_F, e^{-d/n}) = \left(J, e^{-d/n}  N(J)^{-1/n} s \right) \in \Pic_F.$$
Thus, the lattice associated to $W$ is $L = e^{-d/n}  N(J)^{-1/n} s J$.
\end{proof}

\section{Computing The Function $h^0$ }\label{sec:4}
Let $W = (I,v)$ be an Arakelov divisor of degree $d$ and let $D$ be the divisor translated from $W$. Assume that a basis for the ideal lattice $I$ and the coordinates of the vector $v$ are known. We compute an approximate value of $h^0$ at the class of the Arakelov divisor $W \in \Pic_F$ with some given error $\delta$. We consider the case in which $v$ is a  long vector and the discriminant $\Delta_F$ is quite large since it is quite trivial to compute $h^0(W)$ in other cases.

We have that
 \begin{equation*}\label{eq:0}
 h^0(W)= \log{\sum_{f \in I}e^{-\pi \|f v\|^2 }}.
\end{equation*} 

We approximate the value of $h^0(W)$ with some small error. This can be done by summing up only the large terms, i.e., the terms
$e^{-\pi  \|f v\|^2 } $ for which  $ \|f v\|^2\leq M$ with some given $M>0$.
In case $v$ is a long vector, while collecting such short vectors $f$, it is quite easy to miss many of them.  Consequently, the obtained value of $h^0(W)$ may be smaller than the true value. 
Therefore, we will find some ``good" divisor  $D'$, that is obtained from an LLL-reduction on $D$ and has nice properties described in Section \ref{result}, then use it  for computing $h^0(W)$. 

Note that by Proposition \ref{pro:close}, for any given divisor $D$ of degree $0$, there exists a good divisor $D'$ close to $D$ in $\Pic_F^0$ in the sense that $\|D-D'\|_{Pic} < \log{\mathcal{D}_F}$.

We first describle the following algorithm that is similar to (cf.[\cite{ref:4}, Algorithm 10.4]). 
\begin{alg}\label{alg1}  
Given two Arakelov divisors $D_1=d(J_1)$ and $D_2= d(J_2)$ such that $N(J_1^{-1}) \le 2^{n(n-1)/2} \partial_F$ and $N(J_2^{-1}) \le 2^{n(n-1)/2} \partial_F$, compute a divisor $d(J)$ obtained from the LLL reduction on $D_1 + D_2 \in \Pic_F^0$ in polynomial time in  $\log{|\Delta_F|}$. \\
\textit{Description.} Since $N(J_1^{-1}) \le 2^{n(n-1)/2} \partial_F$ and $N(J_2^{-1}) \le 2^{n(n-1)/2} \partial_F$, the result $D_3=D_1 + D_2 =(J_1 J_2, N(J_1 J_2))$ can be computed in time polynomial in $\log{|\Delta_F|}$. Then one performs the LLL reduction on the divisor $D_3$. The resulting divisor $d(J)$ is then close to $D_1 + D_2$ by Proposition \ref{pro:close}. Since $N(J_1 J_2)^{-1} \le 2^{n(n-1)} \partial_F^2$, the running time of this second step is also polynomial in  $\log{|\Delta_F|}$. 
\end{alg}

Next, we explain how to compute efficiently a divisor $D'$ obtained from some LLL reduction close to a given divisor $D= (O_F, u )$ in $\Pic_F^0$. This process can be seen as performing repeatedly doubling and LLL-reduction to go from the origin $(O_F,1)$ to $D$. We apply the ``jump algorithm" [10, Algorithm 10.8] with a minor modification to adapt to our situation. Indeed, in the reduction step, instead of using a shortest vector, we use the first vector of an LLL-reduced basis of the lattices associated to Arakelov divisors. 

\begin{alg}\label{alg2}
Given a divisor $D= (O_F, u )$ of degree 0, compute a reduced Arakelov divisor whose image in $\Pic^0_F$ has distance less than $\log{\mathcal{D}_F}$ from $D$.\\
\textit{Description.}
Assume that $u=(e^{-w_{\sigma}})_{\sigma}$. Let $t\geq 0$ be the smallest integer for which $n\cdot2^{-t}\cdot|w_{\sigma}|<\log \partial_F $ for all $\sigma$. Then $z_{\sigma}=2^{-t}\cdot w_{\sigma}$ satisfies $n\cdot|z_{\sigma}|<\log \partial_F $ for all $\sigma$. Let $\omega =(e^{-z_{\sigma}})_{\sigma}$. Then $\omega^{2^t}=u'$. In other words, from the point $(O_F,\omega)$ we can reach to $D$ after $t$ times doubling.
 Denote by $W_i = (O_F, \omega^{2^i})$ for $ i = 1, 2, ..., t$.  
  We inductively compute Arakelov divisors $D'_i = d(J_i)$ obtained by LLL reduction for which
  \begin{equation}\label{D'}
  \|W_i - D'_i \|_{Pic} \le \log{\mathcal{D}_F} .
  \end{equation}  
  We compute $D'_{i+1}$ from $W_i$ by doubling and doing LLL reduction. More precisely, by induction, there exists some $\omega_i \in \left(\prod_{\sigma} \mathbb{R}_{+}^*\right)^0$ such that  $W_i = D'_i + (O_F,\omega_i) $ in $\Pic_F^0$ and
  $$\|W_i - D'_i \|_{Pic}  = \|\omega_i\|_{Pic} < \log{\mathcal{D}_F}.$$
  
   Since $W_i = D'_i + (O_F,\omega_i)= (J_i, v_i)$ where $v_i = \omega_i N(J_i)^{-1/n})$, we get $W_{i+1}= 2 W_i = (J_i^2,v_i^2) $. Let $D'_{i+1}$ be a divisor obtained by an LLL reduction on $W_{i+1}$. 
   Then there is an LLL-reduced basis $\{ b_1, ..., b_n\} $ of the lattice $L_{i+1}= v_i^2  J_i^2$ associated to $W_{i+1}$ such that $J_{i+1} =  b_1^{-1} L_{i+1} $. 
  Proposition \ref{pro:close} shows that 
  $W_{i+1} = D'_{i+1} + (O_F,\omega_{i+1})  \text{ in } \Pic_F^0$ for some $\omega_{i+1} \in \left(\prod_{\sigma} \mathbb{R}_{+}^*\right)^0$ such that 
  $$\|W_{i+1} -D'_{i+1}\|_{Pic} =\|\omega_{i+1}\|_{Pic} < \log{\mathcal{D}_F}.$$
Note that here $\omega_{i+1} = |b_1| N(J_{i+1})^{1/n}$. Thus, we can construct all divisor $D_i'$ satisfying \eqref{D'} for $i=1, 2, ..., t$.

Now let $s = \omega_t$ and let $D' = d(J_t)$. Then
$$D = W_t = D'_t + (O_F,\omega_{t})= D' + (O_F, s) \text{ in } \Pic_F^0$$
 for $s = \omega_t \in \left(\prod_{\sigma} \mathbb{R}_{+}^*\right)^0$ and $\|D-D'\|_{Pic}= \| s\|_{Pic} = \|\omega_t\|_{Pic} < \log{\mathcal{D}_F}$. This completes the description of the algorithm.
 
\end{alg}

With the notation of Proposition \ref{pro:close} and Corollary \ref{pro:norm}, let $L =  e^{-d/n}  N(J)^{-1/n}  s J$ be the lattice associated to $W$. Then
 \begin{equation}\label{eq:1}
 h^0(W)= \log{\sum_{f \in J}e^{-\pi e^{-2 d/n}  N(J)^{-2/n}  \|fs\|^2}} = \log{\sum_{g' \in  L}e^{-\pi \|g'\|^2}}.
\end{equation} 

Every vector of the lattice $L$ has the form $ e^{-d/n}  N(J)^{-1/n}  s f$ for some $f \in J$. 
As $s$ is short, $N(J)^{-1/n}$ is a small scalar and $J$ is a nice lattice (see Proposition \ref{pro:close}), we can easily compute an LLL-reduced basis of $L$ then find short vectors of the lattice $L$ more efficiently. Therefore, the value of $h^0(W)$ can be computed more exactly.
Computing $h^0(W)$ is done in 3 steps described in Section \ref{sec:jump}, \ref{sec:poi} and \ref{sec:short} in succession.
\subsection{Finding a good divisor $D'$ close to $D$}\label{sec:jump}
Assume that a basis for the ideal lattice $I$ and the coordinates of the vector $v$ are known. 
We will find a  divisor $D'$ that is obtained from some LLL reduction and with the property that
$D = D' + (O_F, s)$ in $\Pic_F^0$ for some $s \in \left(\prod_{\sigma} \mathbb{R}_{+}^*\right)^0$ and 
$\|D -D'\|_{Pic} < 3 \log{\mathcal{D}_F}$.

Let $D_1=(I, N(I)^{-1/n})$ and to $D_2= (O_F, N(I)^{1/n} u)$. Then $D_1$, $D_2$ have degree $0$ and $D= D_1 +D_2$. We compute divisors $D_1'$ (see \ref{st1}) and $D_2'$ (see \ref{st2}) obtained from some LLL reduction so that  $\|D_1 -D_1'\|_{Pic} < \log{\mathcal{D}_F}$ and  $\|D_2 -D_2'\|_{Pic} < \log{\mathcal{D}_F}$. Then we find a divisor $D'=d(J)$ close $D_1' + D_2'$ in $\Pic_F^0$ (see \ref{st3}).
This process is described as follows.
 
 \subsubsection{Computing $D_1'$} \label{st1}
 We compute a divisor $D_1'$ close to $D_1$ in $\Pic_F^0$ in the sense that its distance to $D_1$ is at most $ \log \mathcal{D}_F$. 
  This can be done easily by performing the LLL reduction on $D_1$. By Proposition \ref{pro:close}, there is some $s_1 \in \left(\prod_{\sigma} \mathbb{R}_{+}^*\right)^0$ so that  $D_1 -D_1'=(O_F,s_1)$ in $\Pic_F^0$  and
  $\|D_1 -D_1'\|_{Pic} =\|s_1\|_{Pic} < \log{\mathcal{D}_F}.$ 
  
 \subsubsection{Computing $D_2'$}\label{st2} Use Algorithm \ref{alg2} to compute a reduced Arakelov divisor$D'_2$  whose image in $\Pic^0_F$ has distance less than $\log{\mathcal{D}_F}$ from $D_2= (O_F, u' )$ with $u'=N(I)^{1/n} u$. We obtain that 
 $$D_2 = D'_2 +  (O_F, s_2) \text{ in } \Pic_F^0$$
  for $s_2 \in \left(\prod_{\sigma} \mathbb{R}_{+}^*\right)^0$ and $\|D_2-D'_2\|_{Pic}= \| s_2\|_{Pic} < \log{\mathcal{D}_F}$.

 \subsubsection{Computing $D'$}\label{st3}  Adding divisors $D_1'$ and $D_2'$ as described in Algorithm \ref{alg1}, we then compute a divisor $D'=d(J)$ close to $D_1' + D_2'$ in $\Pic_F^0$.  
 Indeed, by performing LLL reduction on the divisor $D_1' + D_2'$ we obtain $D'=d(J)$ and  $(D_1' + D_2')-D'= (O_F, s_3)$ for some $s_3 \in \left(\prod_{\sigma} \mathbb{R}_{+}^*\right)^0$ and 
 $\|(D_1' + D_2')-D'\|_{Pic}= \| s_3\|_{Pic} < \log{\mathcal{D}_F}$.
 
 Let $s=s_1 \cdot s_2 \cdot s_3 \in \left(\prod_{\sigma} \mathbb{R}_{+}^*\right)^0$. Then
  $$D- D' = (D_1 -D_1') + (D_2 -D_2') -((D_1' + D_2')-D') = (O_F, s) \in \Pic_F^0.$$
  Thus, 
  $\|D- D'\|_{Pic} = \|s\|_{Pic} \le \|s_1\|_{Pic} + \|s_2\|_{Pic} + \|s_3\|_{Pic} \le  3 \log \mathcal{D}_F.$

  \subsection{Applying Poisson summation for the lattice $L$} \label{sec:poi}
 Since $D- D' =  (O_F, s) \in \Pic_F^0$, it follows that $D=\left(J,  N(J)^{-1/n}  s \right)$. 
  By translating  $D$ to the divisor $W$, we obtain that  
       $W= D+ (O_F,e^{-d/n} ) = \left(J, e^{-d/n}  N(J)^{-1/n}  s \right).$
       
  Let $L = e^{-d/n}  N(J)^{-1/n}  s J$ be the lattice associated to $D$. Assume that $L$ has an LLL-reduced basis $\{ \mathsf{b}_1, ..., \mathsf{b}_n\}$. 
   Let $\{ \mathsf{b}_1^*, ..., \mathsf{b}_n^*\}$ denote the Gram-Schmidt orthogonalization of this basis and
   $$\mu_{i, j} = \frac{\langle \mathsf{b}_i, \mathsf{b}_j^*\rangle}{\| \mathsf{b}_j^*\|^2} \text{ for all } 2 \le i \le n \text{ and } 1\le j \le i-1.$$ 
  
   Then any element $\mathsf{z} \in L$ can be written uniquely as $\mathsf{z} = \sum_{i=1}^n x_i \mathsf{b}_i $ with the coefficients $x_i \in  \mathbb{Z}$ for all $i =1, 2, ..., n$. Similar to Lemma \ref{poi3}, we can write $\|\mathsf{z}\|^2$ as below.
  \begin{equation}\label{eq:3a}
       \|\mathsf{z}\|^2 = \sum_{i=1}^{n} A_{i,i} \left(x_i + \sum_{j=1}^{i-1} A_{i, j} x_j\right)^2
     =q(x_1, x_2, ..., x_n).
     \end{equation}
       where $A_{i,i} =  \|\mathsf{b}_i^*\|^2 $ and $A_{i,j}=\mu_{i, j}$.   
         Therefore,
          \begin{equation}\label{eq:3}
           h^0(W)=\log{\sum_{\mathsf{z} \in L}e^{-\pi \|\mathsf{z}\|^2}}
         =\log{\sum_{x_i \in \mathbb{Z}}e^{-\pi q(x_1, x_2, ..., x_n)}}.
         \end{equation}

  \begin{remark}\label{re:Aii}
   We only catch the vectors $\mathsf{z}$ in $L$ for which $ \|\mathsf{z}\|^2 \leq M$. In other words, we only compute vectors $\mathsf{x}=(x_1, x_2, ..., x_n) \in \mathbb{Z}^n$ satisfying $q(\mathsf{x}) \leq M $. 
   The Fincke--Pohst algorithm {\cite[Algorithm 2.12]{ref:40}} or an LLL reduced basis of $L$ {\cite[Section 12]{ref:1}} can be used to find the list $\mathcal{L}$ of these vectors $\mathsf{x}$.
    
    An approximate value of $h^0$ is obtained by summing up only the terms $e^{-\pi q(\mathsf{x})} $ for which $\mathsf{x} \in \mathcal{L}$ as below.
    \begin{equation}\label{eq:4}
    h^0(W) \approx  \log{\sum_{\mathsf{x} \in \mathcal{L}}e^{-\pi q(\mathsf{x})}}.
    \end{equation}   
    
  The lattice $L$ has covolume  $\co(L)=\prod_{i=1}^{n} \|\mathsf{b}_i^*\|=\sqrt{\prod_{i=1}^{n}A_{ii}}$. The list $\mathcal{L}$ can have at most $\alpha(n) \frac{M^{n/2}}{\co(L)}$ vectors. Here $\alpha(n)$ is a function depending only on $n$. See Algorithm 2.12 in \cite{ref:40} and Section 12 in \cite{ref:1} for the explanation.  Therefore, in order to reduce the number of vectors in the list  $\mathcal{L}$, we can ``make" $\co(L)$ larger by using the Poisson summation formula as follows.
     \end{remark}

  Assume that $  \|\mathsf{b}_1^*\| =\|\mathsf{b}_1\| < 1$.
 Let $k$ be the largest index such that $ A_{i,i} <1 \text{ for all } i \leq k$. Denote by $$L_1 = \oplus_{i=1}^k \mathbb{Z} \cdot \mathsf{b}_i \hspace*{1cm} \text{and } \hspace*{1cm} L_2 = \oplus_{j=k+1}^n \mathbb{Z} \cdot \mathsf{b}_j.$$ 
 Then $L = L_1 \oplus L_2$. Since $\{\mathsf{b}_1, ..., \mathsf{b}_n\}$ is LLL-reduced, the vectors $\mathsf{b}_1, ..., \mathsf{b}_k$ form an LLL-reduced basis for $L_1$ (see \cite{ref:1}).

 \begin{remark}\label{dualbasic}
 Let $B$ be the matrix of which columns are vectors $\mathsf{b}_1, ..., \mathsf{b}_k$ and let $G= B^t \cdot B$. Then the columns of $B \cdot G^{-1}$ form a basis for the dual lattice $L_1^{\vee}$ of $L_1$.
 \end{remark}

 Now we apply the Poisson summation formula for $L$. See Lemma \ref{poi1}. Let $\gamma$ be the covolume of the lattice $L_1$ inside $V= L_1 \otimes \mathbb{R}$. Then $\gamma =\prod_{i=1}^k \|\mathsf{b}_{i}^*\|$ and 
  \begin{equation}\label{e1}
    h^0(W) = \log{\left( \frac{1}{\gamma}\sum_{\mathsf{b} \in L_2}e^{-\pi \|\mathsf{b}-\pi(\mathsf{b})\|^2}\sum_{\mathsf{a} \in L_1^{\vee}} e^{-\pi \|\mathsf{a}\|^2 -2 \pi i \langle \mathsf{a} , \pi(\mathsf{b})\rangle }\right)}.
  \end{equation}

  Assume that $L_1^{\vee}$ has a basis $\mathsf{c}_1,\cdots , \mathsf{c}_k$ that is computed by Remark \ref{dualbasic}. Denote by $\{ \mathsf{c}_1^*, \cdots, \mathsf{c}_k^*\}$ the Gram-Schmidt orthogonalization of the basis $\{ \mathsf{c}_1, \cdots, \mathsf{c}_k\}$ and 
  $$C_{i, i} = \|\mathsf{c}_i^*\|^2, \hspace*{0,5cm}C_{i,j} = \frac{\langle \mathsf{c}_i, \mathsf{c}_j^*\rangle}{\| \mathsf{c}_j^*\|^2} \text{ for all } 1 \le i \le k \text{ and } 1\le j \le i-1.$$
  
  Now let $\mathsf{a}= \sum_{i=1}^{k} x_i \mathsf{c}_i \in L_1^{\vee}$ where $x_i \in \mathbb{Z}$ for all $i=1, 2, \cdots, k$ and $\mathsf{b} = \sum_{j=k+1}^{n} x_j \mathsf{b}_j\in L_2$ where $x_j \in \mathbb{Z}$ for all $j=k+1, \cdots, n$. 
 
 \begin{lemma}\label{poi3}
 We have 
 $$C_{i,i} = \|\mathsf{c}_i^*\|^2 = \frac{1}{\|\mathsf{b}_i^*\|^2} \text{ for all } 1 \le i \le k $$
 and moreover
 \begin{equation}\label{e3}
   \|\mathsf{a}\|^2 = \sum_{i=1}^{k} C_{i,i} \left(x_i + \sum_{r=i+1}^{k} C_{r,i} x_r \right)^2.
  \end{equation}
 
  \end{lemma}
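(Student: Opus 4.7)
The plan is to treat the two assertions separately. The second identity is just the standard expansion of the squared Euclidean norm in terms of a Gram--Schmidt basis, applied to $L_1^{\vee}$ with basis $\mathsf{c}_1, \ldots, \mathsf{c}_k$. Writing $\mathsf{c}_i = \mathsf{c}_i^{*} + \sum_{j<i} C_{i,j}\, \mathsf{c}_j^{*}$ and substituting into $\mathsf{a} = \sum_i x_i \mathsf{c}_i$, one regroups to express $\mathsf{a}$ as a linear combination of the orthogonal vectors $\mathsf{c}_i^{*}$, with coefficient $x_i + \sum_{r>i} C_{r,i}\, x_r$ in front of $\mathsf{c}_i^{*}$. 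Pythagoras then yields (\ref{e3}) at once; this is the same bookkeeping already implicit in (\ref{eq:3a}).

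The substantive content is the identity $\|\mathsf{c}_i^{*}\|^2 = 1/\|\mathsf{b}_i^{*}\|^2$. The plan is to exploit the matrix identity $C = BG^{-1}$ of Remark \ref{dualbasic} together with the $LDU$-type factorization of $B$ coming from its own Gram--Schmidt decomposition. If $B = B^{*} U$, where $B^{*}$ has columns $\mathsf{b}_i^{*}$ and $U$ is the upper-unitriangular matrix of Gram--Schmidt coefficients, then $G = B^T B = U^T D U$ with $D = \mathrm{diag}(\|\mathsf{b}_1^{*}\|^2, \ldots, \|\mathsf{b}_k^{*}\|^2)$. Substituting, $C = BG^{-1} = B^{*} D^{-1} U^{-T}$, and since $U^{-T}$ is lower-unitriangular the columns of $C$ expanded in the orthogonal basis $\{\mathsf{b}_1^{*}, \ldots, \mathsf{b}_k^{*}\}$ have vanishing components along $\mathsf{b}_j^{*}$ for $j<i$ and leading coefficient $1/\|\mathsf{b}_i^{*}\|^2$ along $\mathsf{b}_i^{*}$.

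From this triangular structure one reads off, by downward induction from $i = k$, that $\mathrm{span}(\mathsf{c}_i, \mathsf{c}_{i+1}, \ldots, \mathsf{c}_k) = \mathrm{span}(\mathsf{b}_i^{*}, \ldots, \mathsf{b}_k^{*})$ for every $i$. Consequently the component of $\mathsf{c}_i$ orthogonal to $\mathrm{span}(\mathsf{c}_{i+1}, \ldots, \mathsf{c}_k)$ is exactly the $\mathsf{b}_i^{*}$-component of $\mathsf{c}_i$, namely $\mathsf{b}_i^{*}/\|\mathsf{b}_i^{*}\|^2$, whose squared norm is $1/\|\mathsf{b}_i^{*}\|^2$. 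Matching this to the paper's Gram--Schmidt indexing of $(\mathsf{c}_i)$ then yields $C_{i,i} = 1/\|\mathsf{b}_i^{*}\|^2$. The main (essentially the only) obstacle is this bookkeeping step: keeping track of the order in which the Gram--Schmidt orthogonalisation of $(\mathsf{c}_i)$ is carried out relative to the downward induction on $(\mathsf{b}_i^{*})$, so that the duality between the diagonal entries of the two $LDU$ factorizations is recorded in compatible indices. Once this is pinned down, both parts of the lemma follow immediately from the factorization $G = U^T D U$ together with Pythagoras.
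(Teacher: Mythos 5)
The second assertion of the lemma is handled correctly: it is just the usual expansion of $\|\mathsf{a}\|^2$ in the orthogonal basis $\{\mathsf{c}_i^*\}$, with coefficient $x_i + \sum_{r>i} C_{r,i}\,x_r$ in front of $\mathsf{c}_i^*$, and this matches the paper's forward Gram--Schmidt indexing $C_{i,j}$ for $j<i$.

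The gap is in the first assertion, and it is not a matter of bookkeeping that can be ``pinned down'': the order mismatch you flag as an obstacle is genuine and fatal to the lemma as stated when $k\ge 2$. Your factorization $C = B^*D^{-1}U^{-T}$ correctly shows that $\mathsf{c}_i$ lies in $\mathrm{span}(\mathsf{b}_i^*,\dots,\mathsf{b}_k^*)$ with $\mathsf{b}_i^*$-component $\mathsf{b}_i^*/\|\mathsf{b}_i^*\|^2$, and therefore that the part of $\mathsf{c}_i$ orthogonal to $\mathrm{span}(\mathsf{c}_{i+1},\dots,\mathsf{c}_k)$ has squared norm $1/\|\mathsf{b}_i^*\|^2$. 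But this is the Gram--Schmidt vector obtained by processing $\mathsf{c}_k,\mathsf{c}_{k-1},\dots,\mathsf{c}_1$ in \emph{reverse} order, whereas the paper's $\mathsf{c}_i^*$ (as forced by the indexing $C_{i,j}$, $1\le j\le i-1$, and by your own derivation of \eqref{e3}) is the component of $\mathsf{c}_i$ orthogonal to $\mathrm{span}(\mathsf{c}_1,\dots,\mathsf{c}_{i-1})$. These two projections differ whenever $k\ge 2$. Concretely, take $\mathsf{b}_1=(1,0)$, $\mathsf{b}_2=(1,1)$: then $\mathsf{b}_1^*=(1,0)$, $\mathsf{c}_1=(1,-1)$, and forward Gram--Schmidt gives $\mathsf{c}_1^*=\mathsf{c}_1$ with $\|\mathsf{c}_1^*\|^2=2$, while $1/\|\mathsf{b}_1^*\|^2=1$. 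So the identity $C_{i,i}=1/\|\mathsf{b}_i^*\|^2$ fails in forward order, and the two halves of the lemma as written require opposite Gram--Schmidt orders; only the product identity $\prod_i C_{i,i}=1/\prod_i\|\mathsf{b}_i^*\|^2$ (i.e.\ $\gamma\cdot\co(L_1^\vee)=1$) holds order-independently. The paper's numerical examples all have $k=1$, where the discrepancy is invisible, which is presumably why this went unnoticed; a correct proof would either pass to reverse-order Gram--Schmidt for the dual basis and rewrite \eqref{e3} accordingly, or replace the individual identities by the product identity wherever they are used downstream.
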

  \begin{proof}
  This is easily proved by using Remark \ref{dualbasic} and properties of the Gram-Schmidt orthogonal basis $\{ \mathsf{c}_1^*, \cdots, \mathsf{c}_k^*\}$.
  \end{proof}

Recall that $A_{j,j} = \|\mathsf{b}_j^*\|^2$ and $A_{t,j} = \mu_{t,j}$ for all $j= k+1, \cdots, n$ and $t> j$. We have the lemmas below.

  \begin{lemma}\label{poi4} 
  We have
   \begin{equation}
     \langle \mathsf{a}, \pi(\mathsf{b})\rangle=  \sum_{l=1}^{k}  \sum_{j=k+1}^{n} \sum_{i=1}^{k} A_{j, i} \langle \mathsf{c}_l,\mathsf{b}_i^* \rangle x_l x_j.
    \end{equation}
   \end{lemma}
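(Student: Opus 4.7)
The plan is a direct computation using the Gram--Schmidt expansion of the $\mathsf{b}_j$ together with the definition of the orthogonal projection $\pi$ onto $V = L_1 \otimes \mathbb{R}$, followed by bilinearity of the inner product.

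First, I would compute $\pi(\mathsf{b}_j)$ for each $j > k$. Since $V$ is spanned by $\mathsf{b}_1,\dots,\mathsf{b}_k$, and equivalently by the first $k$ Gram--Schmidt vectors $\mathsf{b}_1^*,\dots,\mathsf{b}_k^*$ (which are mutually orthogonal), the projection formula gives
\begin{equation*}
\pi(\mathsf{b}_j) \;=\; \sum_{i=1}^{k} \frac{\langle \mathsf{b}_j, \mathsf{b}_i^*\rangle}{\|\mathsf{b}_i^*\|^2}\,\mathsf{b}_i^* \;=\; \sum_{i=1}^{k} \mu_{j,i}\,\mathsf{b}_i^* \;=\; \sum_{i=1}^{k} A_{j,i}\,\mathsf{b}_i^*,
\end{equation*}
using the definition $A_{j,i} = \mu_{j,i}$ from earlier. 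Extending by linearity, since $\mathsf{b} = \sum_{j=k+1}^{n} x_j \mathsf{b}_j$, I obtain
\begin{equation*}
\pi(\mathsf{b}) \;=\; \sum_{j=k+1}^{n}\sum_{i=1}^{k} x_j\,A_{j,i}\,\mathsf{b}_i^*.
\end{equation*}

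Second, I would write $\mathsf{a} = \sum_{l=1}^{k} x_l \mathsf{c}_l$ and expand $\langle \mathsf{a}, \pi(\mathsf{b})\rangle$ by bilinearity:
\begin{equation*}
\langle \mathsf{a}, \pi(\mathsf{b})\rangle \;=\; \Big\langle \sum_{l=1}^{k} x_l \mathsf{c}_l,\; \sum_{j=k+1}^{n}\sum_{i=1}^{k} x_j A_{j,i}\mathsf{b}_i^* \Big\rangle \;=\; \sum_{l=1}^{k}\sum_{j=k+1}^{n}\sum_{i=1}^{k} A_{j,i}\,\langle \mathsf{c}_l, \mathsf{b}_i^*\rangle\, x_l x_j,
\end{equation*}
which is exactly the claimed identity.

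There is essentially no hard step: everything reduces to the standard formula for projection onto a subspace with a known orthogonal basis and bilinearity of the scalar product. The only thing to be careful about is ensuring that the projection onto $V$ is correctly expressed via the Gram--Schmidt vectors of the full basis rather than of some other basis of $V$, which is valid here because $\mathsf{b}_1^*,\dots,\mathsf{b}_k^*$ genuinely span $V$ and are orthogonal. No use of LLL-reducedness is needed for this lemma.
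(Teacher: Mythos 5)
Your proof is correct and follows essentially the same route as the paper: both compute $\pi(\mathsf{b})$ by expanding in the orthogonal basis $\mathsf{b}_1^*,\dots,\mathsf{b}_k^*$ of $V$, identify the coefficients with $A_{j,i}=\mu_{j,i}$, and then take the scalar product with $\mathsf{a}=\sum_{l} x_l \mathsf{c}_l$ by bilinearity. The only cosmetic difference is that you project each $\mathsf{b}_j$ separately before summing, while the paper projects $\mathsf{b}$ directly.
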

   
   \begin{proof}
    Because $\pi(\mathsf{b})$ is the orthogonal projection of $\mathsf{b}$ on $V$ that has an orthogonal basis $\mathsf{b}_1^*, \cdots, \mathsf{b}_k^*$, we obtain that
    \begin{equation}\label{equ1}
    \pi(\mathsf{b}) = \sum_{i=1}^{k} \frac{\langle \mathsf{b}, \mathsf{b}_i^* \rangle}{\|\mathsf{b}_i^*\|^2} \mathsf{b}_i^* = \sum_{i=1}^{k} \sum_{j=k+1}^{n} x_j A_{j,i} \mathsf{b}_i^* .
    \end{equation}
    Then the result is implied by taking scalar product of $ \pi(\mathsf{b})$ with $\mathsf{a}= \sum_{l=1}^{k} x_l \mathsf{c}_l$.
   \end{proof}

 \begin{lemma}\label{poi2} 
 We have
 $$\|\mathsf{b} -\pi(\mathsf{b})\|^2 = \sum_{j=k+1}^{n} A_{j,j} \left(x_j + \sum_{t=j+1}^{n} A_{t, j} x_t\right)^2.$$
 \end{lemma}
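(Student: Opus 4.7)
The plan is to use the Gram--Schmidt expansion of each basis vector $\mathsf{b}_j$ with $j \geq k+1$ in the orthogonal basis $\{\mathsf{b}_1^*, \ldots, \mathsf{b}_n^*\}$ of $L \otimes \mathbb{R}$, and then exploit the fact that the orthogonal projection $\pi$ onto $V = L_1 \otimes \mathbb{R}$ is precisely the projection onto the subspace spanned by $\mathsf{b}_1^*, \ldots, \mathsf{b}_k^*$.

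First, for each $j \in \{k+1, \ldots, n\}$, the Gram--Schmidt relation gives
\[
 \mathsf{b}_j = \mathsf{b}_j^* + \sum_{i=1}^{k} \mu_{j,i}\, \mathsf{b}_i^* + \sum_{i=k+1}^{j-1} \mu_{j,i}\, \mathsf{b}_i^*.
\]
Applying $\pi$ annihilates $\mathsf{b}_j^*$ for $j \geq k+1$ and fixes $\mathsf{b}_i^*$ for $i \leq k$, so $\mathsf{b}_j - \pi(\mathsf{b}_j) = \mathsf{b}_j^* + \sum_{i=k+1}^{j-1} \mu_{j,i}\, \mathsf{b}_i^*$. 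Summing over $j$ with coefficients $x_j$,
\[
\mathsf{b} - \pi(\mathsf{b}) = \sum_{j=k+1}^{n} x_j \mathsf{b}_j^* + \sum_{j=k+1}^{n} \sum_{i=k+1}^{j-1} x_j \mu_{j,i}\, \mathsf{b}_i^*.
\]

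Next I would swap the order of summation in the double sum, relabelling the outer index $i \mapsto j$ and the inner index $j \mapsto t$, to collect together the coefficient of each $\mathsf{b}_j^*$ with $j \geq k+1$. This gives
\[
\mathsf{b} - \pi(\mathsf{b}) = \sum_{j=k+1}^{n} \Bigl( x_j + \sum_{t=j+1}^{n} \mu_{t,j}\, x_t \Bigr) \mathsf{b}_j^*.
\]
Since $\mu_{t,j} = A_{t,j}$ by definition, the coefficient of $\mathsf{b}_j^*$ is exactly $x_j + \sum_{t=j+1}^{n} A_{t,j} x_t$.

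Finally, by orthogonality of the Gram--Schmidt vectors $\mathsf{b}_{k+1}^*, \ldots, \mathsf{b}_n^*$ and the identity $\|\mathsf{b}_j^*\|^2 = A_{j,j}$, the squared norm splits as the diagonal sum
\[
\|\mathsf{b}-\pi(\mathsf{b})\|^2 = \sum_{j=k+1}^{n} A_{j,j} \Bigl( x_j + \sum_{t=j+1}^{n} A_{t,j}\, x_t \Bigr)^2,
\]
which is the claimed formula. The only mild obstacle is keeping the index ranges straight when swapping summation orders: one must be careful that the self-terms $\mathsf{b}_j^*$ (arising from Gram--Schmidt) and the cross-terms $\mu_{j,i}\mathsf{b}_i^*$ combine correctly to yield a triangular coefficient expression in the $x_j$'s for $j \geq k+1$, exactly mirroring the structure of Equation~\eqref{eq:3a}.
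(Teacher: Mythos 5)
Your proof is correct and follows essentially the same route as the paper: both expand $\mathsf{b}$ in the Gram--Schmidt orthogonal basis, identify the component lying in $V = \mathrm{span}(\mathsf{b}_1^*,\dots,\mathsf{b}_k^*)$ as $\pi(\mathsf{b})$, reindex the remaining double sum into the triangular form $x_j + \sum_{t>j}\mu_{t,j}x_t$, and invoke pairwise orthogonality of $\mathsf{b}_{k+1}^*,\dots,\mathsf{b}_n^*$ together with $A_{j,j} = \|\mathsf{b}_j^*\|^2$. You apply $\pi$ to each $\mathsf{b}_j$ before summing whereas the paper writes out the full expansion of $\mathsf{b}$ first and then subtracts $\pi(\mathsf{b})$, but this is only a cosmetic difference.
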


 \begin{proof}
 Since $b_j = \mathsf{b}_j^* + \sum_{t=1}^{j-1} \mu_{j,t} \mathsf{b}_t^*$ for all $j \ge 2$, the vector  $\mathsf{b}$ therefore can be rewritten as
 
 \begin{equation}\label{e4} 
    \mathsf{b} = \sum_{j=k+1}^{n} \left(x_j +\sum_{t=j+1}^{n} \mu_{t, j} x_t \right) \mathsf{b}_j^*  + \sum_{i=1}^{k} \sum_{j=k+1}^{n} x_j \mu_{j,i} \mathsf{b}_i^*.
    \end{equation}
By using equalities \eqref{e3} and \eqref{e4}, the result is obtained since the vectors $\mathsf{b}_{k+1}^*, ..., \mathsf{b}_n^*$ are pairwise orthogonal.
 \end{proof}

Lemma \ref{poi3}, \ref{poi4}, \ref{poi2} and \eqref{e1} lead to 
  \begin{equation}\label{for:Poik} 
  h^0(W)=\log{\left(\frac{1}{\gamma}\sum_{x_i \in \mathbb{Z}}e^{-\pi Q(x_1, x_2, ..., x_n)  } \right)} .
  \end{equation} 
   where $Q(x_1, \cdots, x_n) = Q_1(x_1,..., x_n)+ 2Q_2(x_1, \cdots,x_n)i$ with
  \begin{multline}\label{for:Poi} 
     Q_1(x_1,..., x_n) = \|\mathsf{b}-\pi(\mathsf{b})\|^2 + \|\mathsf{a}\|^2\\
       = \sum_{j=k+1}^{n} A_{j,j} \left(x_j + \sum_{t=j+1}^{n} A_{t, j} x_t\right)^2+ \sum_{i=1}^{k} C_{i,i} \left(x_i + \sum_{r=i+1}^{k} C_{r,i} x_r \right)^2
     \end{multline}
  and 
  $$ Q_2(x_1, \cdots,x_n) = \langle \mathsf{a} , \pi(\mathsf{b}) \rangle = \sum_{l=1}^{k}  \sum_{j=k+1}^{n} \sum_{i=1}^{k} A_{j, i} \langle \mathsf{c}_l,\mathsf{b}_i^* \rangle x_l x_j.$$

\subsection{Finding the short vectors of the lattice associated to $D$}\label{sec:short}    
  An approximation of $h^0(W)$ is obtained by summing up the terms $e^{-\pi Q(\mathsf{x}) }$ such that $ Q_1(\mathsf{x}) \leq M $. By using the Fincke--Pohst algorithm, we can find the list $\mathcal{L}_1$ containing all vectors $\mathsf{x}=(x_1, \cdots, x_n) \in \mathbb{Z}^n$ such that $ Q_1(\mathsf{x}) \leq M $. See Algorithm 2.12 in \cite{ref:40}. 
  Then an approximate value of $h^0(W)$  is obtained as follows.
 \begin{equation}\label{eq:8}
  h^0(W) \approx  \log{\left(\frac{1}{\gamma}  \sum_{\mathsf{x} \in \mathcal{L}_1}e^{-\pi Q(x_1, ..., x_n)} \right)}.
  \end{equation}

  \begin{remark} \label{expla}
  Let $\mathcal{L}_1 = \{ \mathsf{x} \in \mathbb{Z}^n: Q_1(\mathsf{x}) \leq M\}$ and $\mathcal{L}=\{ \mathsf{x} \in \mathbb{Z}^n: q(\mathsf{x}) \leq M\}$. Let $L'$ be the lattice associated to the quadratic form $Q_1(\mathsf{x})$. 
  Then
  \begin{equation}\label{eq:col}
\co(L')^2 =   \prod_{i=1}^{k} C_{i,i} \prod_{j=k+1}^{n} A_{j,j} = \frac{1}{\prod_{i=1}^{k} A_{i,i} }\prod_{j=k+1}^{n} A_{j,j} = \frac{1}{(\prod_{i=1}^{k} A_{i,i})^2}  \co(L)^2.
  \end{equation}
  
    We have that 
     $\#\mathcal{L} \le \alpha(n) \frac{M^{n/2}}{\co(L)}$ and   $\#\mathcal{L}_1 \le \alpha(n) \frac{M^{n/2}}{\co(L')}$ (see Algorithm 2.12 in \cite{ref:40} and Section 12 in \cite{ref:1}). 
  Since $A_{i,i}<1$ for all $i \le k$, it follows that $\co(L') \ge \co(L)$. From this inequality and \eqref{eq:col}, we usually obtain that $\#\mathcal{L}_1 \le \#\mathcal{L}$.
  
   Thus, the list $\mathcal{L}_1$ usually  contains less vectors than the list $\mathcal{L}$ does. In addition,  $\frac{1}{\gamma}=\frac{1}{\sqrt{\prod_{i=1}^k A_{i,i}}} > 1$. At the result, the sum in \eqref{eq:8} converges better than in \eqref{eq:4}. 
  Hence, we can compute \eqref{eq:8} by only summing a small number of terms.
  
 Note that the function \texttt{qfminim} in \texttt{pari-gp} that uses the Fincke--Pohst algorithm, can be used to find all nonzero vectors (up to a sign) with length bounded by $M$ of a given lattice.
 Another method uses an LLL reduced basis of the lattice $L'$; see Section 12 in \cite{ref:1}. For a fixed lattice, the complexity of both methods is in polynomial time in $M$ (see Section \ref{sec:runtime}). 

By the proofs of Lemma \ref{error}, \ref{Q1} and Proposition \ref{esterr}, to approximate $h^0(W)$ with an error $\delta$, we can choose $M \approx \frac{1}{\pi -1 } \left( \log(1/\delta) + (n+1) \log 3 + (n(n+1)/2-1) \log 2 \right)$.
 
  \end{remark}

The  algorithm below computes an approximate value of $h^0(W)$ with a given error $\delta$.\\
\textbf{Input:}     
\begin{itemize}
     \item  A basis for the lattice $I$.
     \item  The coordinates of $v$.
     \item  An error $\delta$.
\end{itemize}
\textbf{Output:} An approximate value of $h^0(W)$ with error $\delta$.\\

\fbox{\begin{minipage}{0.95\textwidth}
 \begin{alg}\label{alg}

\begin{arabiclist}
 \item Find a divisor $D'$ that is close to $D$ in $\Pic^0_F$ as described in Section \ref{sec:jump}.

 \item Apply Poisson summation formula.
     \begin{alphlist}[(a)]
         \item Find an LLL-reduced basis
            $\{ \mathsf{b}_1, ..., \mathsf{b}_n\}$ of $L'$.
         \item Compute $\{ \mathsf{b}_1^*, ..., \mathsf{b}_n^*\}$ and 
           $A_{i,i} =  \|\mathsf{b}_i^*\|^2 $ and 
           $A_{i,j} = \frac{\langle \mathsf{b}_i, \mathsf{b}_j^*\rangle}{\| \mathsf{b}_j^*\|^2} \text{ for all } 2 \le i \le n \text{ and } 1\le j \le i-1.$ 
      
        \item If $\|\mathsf{b}_1\| \ge 1$, then put  $Q(x_1, ..., x_n)=Q_1(x_1, ..., x_n)=q(x_1, ..., x_n)$ (see \eqref{eq:3a}) and $Q_2(x_1, ..., x_n)=0$, $L'=L$ and $\gamma=1$.       
        
         If $\|\mathsf{b}_1\| < 1$, then let $k$ be the largest index such that $ \|\mathsf{b}_j^*\| <1 \text{ for all } j \leq   k$.  Denote by $B$ the matrix of which columns are vectors $\mathsf{b}_1, ..., \mathsf{b}_k$. 
             \begin{romanlist}
                \item[(i)] Compute $G= B^t \cdot B$ and  $C= B \cdot G^{-1}$.
                \item[(ii)] Let $\mathsf{c}_1,\cdots , \mathsf{c}_k$ be the columns of $C$. Compute $\{ \mathsf{c}_1^*, \cdots, \mathsf{c}_k^*\}$ and 
                $C_{i, i} = \|\mathsf{c}_i^*\|$ and $C_{i,j} = \frac{\langle \mathsf{c}_i, \mathsf{c}_j^*\rangle}{\| \mathsf{c}_j^*\|^2} \text{ for all } 1 \le i \le k \text{ and } 1\le j \le i-1.$
                \item[(iii)] Compute $\langle \mathsf{c}_l,\mathsf{b}_i^* \rangle$ for all $l=1, ...,k$ and $i=1,...,k$.
                \item[(iv)] Denote $Q(x_1,..., x_n)$, $Q_1(x_1,..., x_n)$ and $Q_2(x_1,..., x_n)$ as in \eqref{for:Poi} and let $L'$ be the lattice associated to $Q_1(\mathsf{x})$ and $\gamma=\sqrt{\prod_{i=1}^k A_{i,i}}$.
            \end{romanlist} 

     \end{alphlist}

 \item  Find the short vectors of the lattice $L'$.
   \begin{alphlist}[(a)]
   \item  Compute  $M = \frac{1}{\pi -1 } \left( \log(1/\delta) + (n+1) \log 3 + (n(n+1)/2-1) \log 2 \right).$
   \item  Find the list $\mathcal{L}_1=\{\mathsf{x}=(x_1, \cdots, x_n) \in \mathbb{Z}^n:  Q_1(\mathsf{x}) \leq M \}$ and approximate $h^0(W)$ as \eqref{eq:8}.
   \end{alphlist}

\end{arabiclist}
     
\end{alg}  
  
\end{minipage}}

\section{The Error and Running Time of The Algorithm}\label{sec:5}

\subsection{Bound for the error in Algorithm \ref{alg} }\label{sec:error}                
To find a bound for the error in approximating the value of $h^0(W)$ in Algorithm \ref{alg}, we use the idea of  {\cite[Section 4]{ref:21}} as below.  

\begin{lemma}\label{error}
	Let $L' $ be a lattice of rank $n$. Assume that the length of shortest vector of the lattice $L'$ is $\lambda $ and $M\geq \max\{\lambda^2, \frac{n}{2} \log{\frac{n}{2}}\} $. \\
	Let 
	$$S'= \sum_{\substack{ \mathsf{a} \in L'   \\ \|\mathsf{a}\|^2 > M }}e^{-\pi \|\mathsf{a}\|^2}.$$
Then $S'=O(\lambda^{-n} e^{-(\pi -1) M})$.
In particular, the bound for $S'$  goes to zero when $M$ tends to  infinity.                          
		               
\end{lemma}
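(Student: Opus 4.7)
The plan is to apply a standard packing-plus-shell argument. Since $0 \in L'$ realizes the minimum distance $\lambda$ between distinct lattice points, the open balls of radius $\lambda/2$ about the points of $L'$ are pairwise disjoint, so comparing their total volume to the volume of a slightly enlarged ball yields
\begin{equation*}
\#\{\mathsf{a} \in L' : \|\mathsf{a}\| \le R\} \le \left(1 + \frac{2R}{\lambda}\right)^n.
\end{equation*}
I would then slice the tail by integer shells, setting $N_k := \#\{\mathsf{a} \in L' : M + k \le \|\mathsf{a}\|^2 < M + k + 1\}$ for $k \ge 0$. The hypothesis $M \ge \lambda^2$ forces $1 \le 2\sqrt{M+k+1}/\lambda$, so the above count gives $N_k \le (4/\lambda)^n (M+k+1)^{n/2}$.

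The next step is to collect the contributions and bound the resulting series:
\begin{equation*}
S' \;\le\; \sum_{k=0}^\infty N_k\, e^{-\pi(M+k)} \;\le\; \frac{4^n}{\lambda^n}\, e^{-\pi M} \sum_{k=0}^\infty (M+k+1)^{n/2}\, e^{-\pi k}.
\end{equation*}
To expose the stated form of the bound I would factor $e^{-\pi M} = e^{-(\pi-1)M}\cdot e^{-M}$ and show that
\begin{equation*}
e^{-M}\sum_{k=0}^\infty (M+k+1)^{n/2}\, e^{-\pi k} \;=\; O(1).
\end{equation*}
This is done by splitting the sum at $k \approx M$: for $k \le M$ one has $(M+k+1)^{n/2} = O(M^{n/2})$ against a geometric series, and for $k > M$ the factor $(1 + (k+1)/M)^{n/2}$ grows only polynomially in $k/M$ and is absorbed by $e^{-\pi k}$. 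What remains is a prefactor of the form $M^{n/2} e^{-M}$ that must be shown to be bounded by a constant depending only on $n$.

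The main obstacle is precisely this last step: verifying that $M^{n/2} e^{-M} = O(1)$ in the range $M \ge \frac{n}{2}\log\frac{n}{2}$. This reduces to analysis of $\varphi(x) := x - \frac{n}{2}\log x$, whose unique minimum on $(0,\infty)$ occurs at $x = n/2$; the hypothesis places $M$ on the increasing branch of $\varphi$, and a short calculus check gives a lower bound on $\varphi(M)$ in terms of $n$ alone. Plugging this back into the shell estimate produces a constant (depending only on $n$) multiplying $\lambda^{-n} e^{-(\pi-1)M}$, which is exactly the statement of the lemma and which manifestly tends to $0$ as $M \to \infty$.
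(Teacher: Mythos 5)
Your proposal is correct and rests on the same two ingredients as the paper's proof, namely a sphere-packing bound on the number of lattice points inside a ball of radius $R$, and the elementary fact that $t^{n/2}$ is eventually dominated by $e^{t}$, but it organizes the estimate differently. The paper rewrites each term via $e^{-\pi\|\mathsf{a}\|^{2}}=\int_{\|\mathsf{a}\|^{2}}^{\infty}\pi e^{-\pi t}\,dt$, exchanges summation and integration to get $S'\le\pi\int_{M}^{\infty}\#B_{t}\,e^{-\pi t}\,dt$ with $B_{t}=\{\mathsf{a}\in L':M\le\|\mathsf{a}\|^{2}\le t\}$, and bounds $\#B_{t}$ by a ball-minus-ball packing estimate; you instead slice the tail into unit-width shells $M+k\le\|\mathsf{a}\|^{2}<M+k+1$, bound each $N_k$ by the packing estimate, and sum the resulting series. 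Both routes reduce to the same final calculus check: that $M^{n/2}e^{-M}$ (in your version) or $\left(\frac{3\sqrt t}{\lambda}\right)^{n}\le\left(\frac 3\lambda\right)^{n}e^{t}$ for $t\ge M$ (in the paper's) holds up to a constant depending only on $n$ under the hypothesis $M\ge\frac n2\log\frac n2$. One small slip: you claim the hypothesis places $M$ on the increasing branch of $\varphi(x)=x-\frac n2\log x$; that requires $\frac n2\log\frac n2\ge\frac n2$, i.e.\ $n\ge 2e$, which fails precisely for the degrees $n=2,3$ the paper cares about. The fix is immediate, since $\varphi$ is bounded below on all of $(0,\infty)$ by $\varphi(n/2)=\frac n2\bigl(1-\log\frac n2\bigr)$, a quantity depending only on $n$, so $M^{n/2}e^{-M}\le e^{-\varphi(n/2)}$ holds for every $M>0$ and the range hypothesis is not actually needed for this particular step.
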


\begin{proof} 	
Let $B_t = \{\mathsf{a} \in L': M \leq \|\mathsf{a}\|^2 \leq t \} \text{ for each } t> M$.
	The balls with centers $\mathsf{a} \in B_t$ and radius $\lambda/2$ are disjoint. Their union is contained in the (hyper) annular disk 
		$$\{ \mathsf{z} \in F_{\mathbb{R}}: \sqrt{M} - \lambda/2 \leq \|\mathsf{z}\| \leq  \sqrt{t} +  \lambda/2\}.$$
		Consequently, the following is implied.
		$$ \left(\frac{\lambda}{2}\right)^n \#B_t \leq  \left(\sqrt{t} +\frac{\lambda}{2}\right)^n - \left(\sqrt{M} -\frac{\lambda}{2}\right)^n.$$
		This leads to
		$$\#B_t \leq \left(1+ \frac{2\sqrt{t}}{\lambda} \right)^n -\left( \frac{2\sqrt{M}}{\lambda} -1\right)^n< \left( \frac{3\sqrt{t}}{\lambda} \right)^n -\left( \frac{2\sqrt{M}}{\lambda} -1\right)^n.$$
		The second inequality is since $t> M \geq \lambda^2$. 
	    Using this inequality, we get
		
		$$S' = \sum_{\substack{ \mathsf{a} \in L' \\ \|\mathsf{a}\|^2 > M }}\int_{\|\mathsf{a}\|^2}^{\infty} \! \pi e^{- \pi t}\, \mathrm{d}t  \leq   \pi  \int_{M}^{\infty} \! \# B_t e^{- \pi t}\, \mathrm{d}t $$
		$$\leq \pi  \int_{M}^{\infty} \! \left(\frac{3\sqrt{t}}{\lambda} \right)^n e^{- \pi t}\, \mathrm{d}t- \pi \int_{M}^{\infty} \!\left( \frac{2\sqrt{M}}{\lambda} -1\right)^n  e^{- \pi t}\, \mathrm{d}t.$$
		Since $M\geq \frac{n}{2} \log{\frac{n}{2}}$, we have $\left(\frac{3\sqrt{t}}{\lambda} \right)^n < \left(\frac{3}{\lambda} \right)^n e^{t} $. This implies that the first integral is at most 
		$\frac{1}{\pi -1} \left(\frac{3}{\lambda}\right)^n \hspace*{0.1cm} e^{-(\pi -1) M}$. The second one is equal to $\frac{1}{\pi} \left( \frac{2\sqrt{M}}{\lambda}-1\right)^n e^{-\pi M}$. Hence
		$$S' \leq \frac{\pi}{\pi -1} \left(\frac{3}{\lambda}\right)^n   e^{-(\pi -1) M} - \left( \frac{2\sqrt{M}}{\lambda}-1 \right)^n e^{-\pi M}.$$
		Thus, the lemma is proved.
	
\end{proof}

\begin{lemma}\label{Q1}
Let $L'$ be the lattice in $\mathbb{R}^n$ associated to the definite positive quadratic form $Q_1(\mathsf{x})$ in \eqref{for:Poi} of Section \ref{sec:poi}. 
Then the shortest vector of the lattice $L'$ has length 
$\lambda \ge 2^{(-n+1)/2}.$
	         	
\end{lemma}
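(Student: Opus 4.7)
The plan is to exploit the Cholesky-like structure of $Q_1$ together with the LLL properties of $\{\mathsf{b}_1,\ldots,\mathsf{b}_n\}$. First I would observe from \eqref{for:Poi} that the variables $x_1,\ldots,x_k$ appear only in the second sum while $x_{k+1},\ldots,x_n$ appear only in the first, so $Q_1$ decouples as $Q_1^{(1)}(x_1,\ldots,x_k)+Q_1^{(2)}(x_{k+1},\ldots,x_n)$. Consequently the Gram matrix of $Q_1$ is block diagonal, $L'$ is an orthogonal direct sum $L'_1 \perp L'_2$, and it is enough to bound $\lambda_1(L'_1)$ and $\lambda_1(L'_2)$ separately.

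Next I would use the triangular shape of each summand. For any positive definite form $\sum_{i=1}^m D_{i,i}\bigl(x_i + \sum_{r>i}E_{r,i}x_r\bigr)^2$ on $\mathbb{Z}^m$ and any nonzero integer vector $\mathsf{x}$, letting $j$ be the largest coordinate with $x_j \neq 0$ collapses the $j$-th inner bracket to $x_j$ itself, yielding $Q(\mathsf{x}) \ge D_{j,j}\, x_j^2 \ge \min_i D_{i,i}$. Applied to $Q_1^{(1)}$, whose diagonal entries are $C_{i,i}=1/\|\mathsf{b}_i^*\|^2$, this gives $\lambda_1(L'_1)^2 \ge \min_{i\le k}1/\|\mathsf{b}_i^*\|^2 > 1$ because $\|\mathsf{b}_i^*\|<1$ for all $i\le k$ by the definition of $k$; applied to $Q_1^{(2)}$, whose diagonal entries are $A_{j,j}=\|\mathsf{b}_j^*\|^2$, it gives $\lambda_1(L'_2)^2 \ge \min_{k<j\le n}\|\mathsf{b}_j^*\|^2$.

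The remaining and only nontrivial step is to show $\|\mathsf{b}_j^*\|^2 \ge 2^{-(n-1)}$ for $j>k$. The maximality of $k$ forces $\|\mathsf{b}_{k+1}^*\|^2 \ge 1$ (otherwise $k+1$ would also satisfy $A_{k+1,k+1}<1$), and iterating the standard LLL recursion $\|\mathsf{b}_{i+1}^*\|^2 \ge \tfrac12 \|\mathsf{b}_i^*\|^2$ from $i=k+1$ gives $\|\mathsf{b}_j^*\|^2 \ge 2^{-(j-k-1)}\|\mathsf{b}_{k+1}^*\|^2 \ge 2^{-(n-k-1)} \ge 2^{-(n-1)}$, which finishes the Poisson branch. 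The degenerate corners ($k=n$, where $L'_2$ is trivial and only the $L'_1$ bound is needed, and $\|\mathsf{b}_1\|\ge 1$, where the algorithm sets $L'=L$ and one directly invokes the LLL inequality $\|\mathsf{b}_1\|\le 2^{(n-1)/2}\lambda_1(L)$ together with $\|\mathsf{b}_1\|\ge 1$) produce the same bound. The only delicate point, and the one spot where the construction of $k$ really enters, is the use of maximality to secure $\|\mathsf{b}_{k+1}^*\|^2 \ge 1$; without it the clean factor $2^{-(n-1)/2}$ would have to be weakened by a factor depending on $\|\mathsf{b}_{k+1}^*\|$.
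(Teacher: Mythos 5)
Your proof is correct and follows essentially the same route as the paper's: split on whether $\|\mathsf{b}_1\|\ge 1$, use the LLL inequality $\|\mathsf{b}_{i+1}^*\|^2\ge \tfrac12\|\mathsf{b}_i^*\|^2$ starting from $\|\mathsf{b}_{k+1}^*\|^2\ge 1$ to bound all $A_{j,j}$ from below, note $C_{i,i}=1/A_{i,i}>1$ for $i\le k$, and conclude $Q_1(\mathsf{x})\ge\min\{C_{i,i},A_{j,j}\}\ge 2^{-(n-1)}$. You actually supply more detail than the paper on the one step it leaves implicit, namely why the Cholesky-shaped form on a nonzero integer vector is bounded below by the smallest diagonal coefficient (your ``largest nonzero coordinate'' argument).
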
   

\begin{proof}

If $ A_{1,1}=\|b_1\|^2 \ge 1$, then by the property of LLL-reduced bases, the length of the shortest vector of $L'$ is at least $2^{(-n+1)/2} \|b_1\|$ {\cite[Section 10]{ref:1}}.  In other words,  $\lambda \ge 2^{(-n+1)/2}$. 

If $ A_{1,1} < 1$, then $\frac{1}{A_{i,i}} >1$ for all $i \leq k$ and $ A_{k+1, k+1} \geq 1$ since $k$ is the largest index such that $ A_{i,i} <1 \text{ for all } i \leq k$.  On the other hand, we have $A_{i,i} = \|\mathsf{b}_i^*\|^2$ for all $i = 1, 2, ..., n$. So, if   $k+2 \leq j \leq n$ then  $A_{j,j} = \|\mathsf{b}_j^*\|^2 \geq 2^{-(j-k-1)} \|\mathsf{b}_{k+1}^*\|^2 =  2^{-(j-k-1)} A_{k+1,k+1}  \geq 2^{-n+1}$  {\cite[Section 10]{ref:1}}.
Thus, all the coefficients $C_{i,i}=\frac{1}{A_{i,i}}$  with $i\leq k$ and $A_{jj}$ with $j\geq k+1$ are at least $2^{-n+1}$. As the result, 
	$$  Q_1(\mathsf{x}) \geq \min{\left\{ C_{i,i}, A_{j,j}: 1 \leq i \leq k \text{ and } k+1 \le j \le n \right\} }\geq 2^{-n+1}.$$
The result now follows since $\lambda^2 = \min\{Q_1(\mathsf{x}): \mathsf{x} \in \mathbb{Z}\}$. 
\end{proof}

\begin{proposition}\label{esterr}
Let $\delta$ be the error in approximating $h^0(W)$ described in Algorithm \ref{alg}. Then for fix degree of the number field, and for $M\geq \max\{\lambda^2, \frac{n}{2} \log{\frac{n}{2}}\}$,  we have $\delta = O( e^{-(\pi -1) M})$.
\end{proposition}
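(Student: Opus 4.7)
The plan is to reduce the bound on $\delta$ to a direct application of Lemma \ref{error} to the lattice $L'$ attached to the form $Q_1$, using Lemma \ref{Q1} to control the shortest vector length $\lambda$ of $L'$. Setting
$$T = \frac{1}{\gamma}\sum_{\mathsf{x}\in \mathbb{Z}^n} e^{-\pi Q(\mathsf{x})}, \qquad T_{\mathcal{L}_1} = \frac{1}{\gamma}\sum_{\mathsf{x}\in \mathcal{L}_1} e^{-\pi Q(\mathsf{x})},$$
the exact value is $h^0(W) = \log T$ by \eqref{for:Poik}, while the algorithm outputs $\log T_{\mathcal{L}_1}$. Since $Q_2$ is a homogeneous quadratic in $\mathsf{x}$, it is invariant under $\mathsf{x}\mapsto -\mathsf{x}$, and $\mathcal{L}_1$ is likewise symmetric (as $Q_1$ is also a quadratic form); this pairs complex-conjugate terms, so $T_{\mathcal{L}_1}$ is real, and $T$ is real and positive via the original theta-sum description of $h^0(W)$.

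Next I would estimate the truncation error at the level of the unnormalised sums. Writing $Q = Q_1 + 2iQ_2$ with $Q_1,Q_2\in \mathbb{R}$ and $Q_1$ positive definite, each discarded summand has modulus $e^{-\pi Q_1(\mathsf{x})}$, so
$$|T - T_{\mathcal{L}_1}| \;\le\; \frac{1}{\gamma}\sum_{\substack{\mathsf{x}\in \mathbb{Z}^n \\ Q_1(\mathsf{x})>M}} e^{-\pi Q_1(\mathsf{x})} \;=\; \frac{S'}{\gamma},$$
where $S'$ is precisely the tail sum of Lemma \ref{error} applied to the real lattice $L'$ associated to $Q_1$. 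Lemma \ref{Q1} yields $\lambda \ge 2^{(-n+1)/2}$, hence $\lambda^{-n} \le 2^{n(n-1)/2}$, a constant in $M$ when $n$ is fixed. Feeding this into Lemma \ref{error} gives $S' = O\bigl(\lambda^{-n} e^{-(\pi-1)M}\bigr) = O\bigl(e^{-(\pi-1)M}\bigr)$.

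Finally, I would pass from the sum-error to the log-error. The zero vector of $L$ contributes $1$ to the theta sum, so $T \ge 1$; once $M$ is large enough that $|T-T_{\mathcal{L}_1}| \le T/2$, the elementary inequality $|\log T - \log T_{\mathcal{L}_1}| \le 2|T-T_{\mathcal{L}_1}|/T$ gives $\delta = O(S'/\gamma) = O(e^{-(\pi-1)M})$, where the factor $1/\gamma$ depends only on $F$ and is absorbed into the implicit constant.

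The step I expect to require the most care is the two-part bookkeeping at the start: checking that $T_{\mathcal{L}_1}$ is real and positive so that $\log T_{\mathcal{L}_1}$ is well-defined, and verifying that the discarded mass of the Poisson-transformed complex sum is in fact governed by the tail of the real theta series of $L'$ rather than by some subtler quantity involving $Q_2$. Once these observations are in place, Lemmas \ref{error} and \ref{Q1} deliver the bound $\delta = O(e^{-(\pi-1)M})$ mechanically.
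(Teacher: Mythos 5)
Your proposal follows the same route as the paper's proof: both bound the truncation error by the tail sum $S'$ of the real theta series of the lattice $L'$ attached to $Q_1$, then invoke Lemma~\ref{error} together with the lower bound on $\lambda$ from Lemma~\ref{Q1}, absorbing $\lambda^{-n}$ (and $1/\gamma$) into the constant for fixed $n$. The only divergence is the final sum-to-log step: the paper works with the unnormalized sums $S_0, S$ and asserts $S_0 > S > 1$ so that $|\log(S_0/S)| \le |S_0 - S|$, whereas you keep the $1/\gamma$ factor so that $T = e^{h^0(W)} \ge 1$ is automatic, check that $T_{\mathcal{L}_1}$ is real by $\mathsf{x}\mapsto -\mathsf{x}$ symmetry, and apply a mean-value bound once the tail forces $T_{\mathcal{L}_1}\ge T/2>0$. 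Your bookkeeping is actually a bit more careful here: the paper's $S > 1$ and $S_0 > S$ are not entirely immediate (the discarded terms form a signed complex sum, and $S_0=\gamma\,e^{h^0(W)}$ with $\gamma<1$), while $T\ge1$ is built into the definition of $h^0$; so the endgame in your version is tidier even though the method is the same.
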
  
\begin{proof}
	Now let 
	$$S= \sum_{\substack{ \mathsf{x} \in \mathbb{Z}^n  \\ Q_1(\mathsf{x}) \leq M }}e^{-\pi Q(\mathsf{x}) } \qquad\text{ and } \qquad S_0=\sum_{\mathsf{x} \in \mathbb{Z}^n}e^{-\pi Q(\mathsf{x}) }.  $$
	Then $h^0(W) = \log(\frac{1}{\gamma}S_0)$ and we approximate it by $\log(\frac{1}{\gamma}S)$. The error in approximating $h^0(W)$ is 
	$$\delta=\left| \log\left(\frac{1}{\gamma}S_0\right) - \log\left(\frac{1}{\gamma}S\right) \right|  = \left| \log{\frac{S_0}{S}} \right|.$$	
	Furthermore,
	$$| S_0 -S| = \left|  \sum_{\substack{ \mathsf{x} \in \mathbb{Z}^n  \\ Q_1(\mathsf{x}) > M }}e^{-\pi Q(\mathsf{x}) }  \right|
	 \leq \sum_{\substack{ \mathsf{\mathsf{x}} \in \mathbb{Z}^n  \\ Q_1(\mathsf{x}) > M }}|e^{-\pi Q(\mathsf{x}) } |  = \sum_{\substack{ \mathsf{x} \in \mathbb{Z}^n  \\ Q_1(\mathsf{x}) > M }}e^{-\pi Q_1(\mathsf{x}) } = \sum_{\substack{ \mathsf{a} \in L'   \\ \|\mathsf{a} \|^2 > M }}e^{-\pi \|\mathsf{a} \|^2 }.$$
	Since $M\geq \max\{\lambda^2, \frac{n}{2} \log{\frac{n}{2}}\} \ge  2^{(1-n)}$,  Lemma  \ref{error} and \ref{Q1} show that
	$|S_0-S| \le S'= O( e^{-(\pi -1) M}).$

	Since $S_0 > S >1$, it follows that 
	$$\delta=\left|\log{\frac{S_0}{S}}\right| \le |S-S_0| < O( e^{-(\pi -1) M}).$$
	
\end{proof}

\subsection{Run time of Algorithm \ref{alg}}\label{sec:runtime}  
	We prove the proposition below.

\begin{prop}
Let $0< \delta <1$. Assume that the given basis of the ideal lattice $I$ and the vector $v$ have size bounded by $|\Delta_F|^{O(1)}$. If the degree $n$ of the number field is fixed, then Algorithm \ref{alg} with an error $\delta$ runs in time  in $ \pol(\log(1/ \delta) \cdot \log{|\Delta_F|})$.
\end{prop}

\begin{proof} 
The basis $\{b_1, ..., b_n\}$ of the ideal lattice $I$ and vector $v$ have size at most $|\Delta_F|^{O(1)}$. Therefore,  Step 1--finding a good divisor $D'$ close to $D$ by using the ``jump algorithm"--runs in time polynomial in $\log{|\Delta_F|} $ {\cite[Algorithm 10.8]{ref:4}}. In addition, the entries of the matrix $B$ bounded by $|\Delta_F|^{O(1)}$ since they  are coordinates of $\{b_1, ..., b_n\}$. Thus, each Step 2a), 2b) and 2c) and hence Step 2 can be done in  polynomial time in $\log{|\Delta_F|}$.

The list $\mathcal{L}_1=\{\mathsf{x}=(x_1, \cdots, x_n) \in \mathbb{Z}^n:  Q_1(\mathsf{x}) \leq M \}$ can be computed by the Fincke--Pohst algorithm (see Algorithm 2.12 in \cite{ref:40}). If the degree $n$ of the number field is fixed, then the complexity of this algorithm is at most $1/ \co(L') \cdot \pol(M)$. See Section 3 in \cite{ref:40} for more details.
The covolume of  the lattice $L'$ is $\co(L') = \prod_{i=1,k}\|\mathsf{c}_i^*\| \cdot \prod_{j=k+1,n}\|\mathsf{b}_j^*\|$.  
By a similar argument in the proof of Lemma \ref{Q1}, one can show that $\|\mathsf{c}_i^*\|= C_{i,i} >1$ and $\|\mathsf{b}_j^*\| = \sqrt{A_{j,j}} \ge 2^{-(j-k-1)/2}$ for all $1 \le i \le k$ and $k+1 \leq j \leq n$. Consequently, $\co(L') \ge 2^{-n(n+2)/4}$.	 
Therefore, the complexity of Step 3 is bounded by $\pol(M)$. Proposition \ref{esterr} says that $M$ is bounded by $O(\log(1/\delta))$. As the result, Step 3 can be done in time $\pol(\log(1/\delta))$.	 
	 
Overall, the algorithm runs in time in $\pol(\log(1/ \delta) \cdot \log{|\Delta_F|})$ for fixed degree $n$ of the number field.		
		
\end{proof}

\section{Some Numerical Examples} 
We compute the value of $h^0$ for real quadratic fields and number fields with unit group of rank $2$. In the examples below, we pick an irreducible polynomial $P$ of large discriminant and compute $h^0$ for the number field $F$ defined by $P$. The algorithm works well without requiring the units of $F$.
Here \textit{pari - gp} is used to compute approximate values of $h^0$ and \textit{Mathematica} is used to plot it.

Since the symmetry induced by Riemann-Roch (see Proposition 1 in \cite{ref:3}), the graphs of $h^0$ on the cosets of $\Pic_F$ are similar.  See Example 1, 2 and 3 in \cite{ref:3}. In the following examples, we compute $h^0$ on the coset $\Pic_F^{(d)}$ of class of divisors of degree $d=1/2 \log \Delta_F$. 
 
 \begin{example}\label{ex:1}
  Let $\Delta_F= 10^{80}+129$ and $P = X^2- \Delta_F$ be the polynomial defining $F=\mathbb{Q}(\sqrt{\Delta_F})$. Then $F$ is a real quadratic field with the discriminant $\Delta_F$ and with  two real infinite  primes 
  $\sigma_1:  F \longrightarrow \mathbb{R}^2$ sends $\sqrt{\Delta_F}$ to itself and $\sigma_2:  F \longrightarrow \mathbb{R}^2$ sends $\sqrt{\Delta_F}$ to $-\sqrt{\Delta_F}$. 
  
  The class number of $F$ is 1 and the group $\Pic_F$ is isomorphic to a cylinder. For every $d \in \mathbb{R}$, the coset $\Pic_F^{(d)}$ of classes of degree $d$ is a circle whose circumference is equal to the regulator $R_F$ of $F$.
  
 We have $\left(\mathbb{R}^2\right)^0 = \{  (x_1,x_2): x_1, x_2 \in \mathbb{R} \text{ and } x_1 + x_2 =0 \} = \{  (-x_1,x_1): x_1 \in \mathbb{R}  \}$  is the bisector of the second quadrant of the axes. It is a 1-dimensional subspace of $\mathbb{R}^2$ with an orthonormal basis $\mathsf{e} = (-\frac{1}{\sqrt{2}}, \frac{1}{\sqrt{2}})$. The connected component of identity of $\Pic_F^0$ is a circle $\To = \left(\mathbb{R}^2\right)^0\slash \Lambda$ where $\Lambda$ is the lattice $\{(\log{|\sigma_1(f)|},\log{|\sigma_2(f)|)}: f \in O_F^* \}$ (that is unknown).

Denote by  $w=10^{20} \cdot \mathsf{e}  =  (-\frac{10^{20}}{\sqrt{2}}, \frac{10^{20}}{\sqrt{2}})= (w_1, w_2)$.  Let $W = (O_F,v) \in \Pic_F^{(d)}$ where $v= |\Delta_F|^{-1/4} \cdot \exp(w) =  |\Delta_F|^{-1/4} \cdot (e^{-w_1},e^{-w_2})$.
   The divisor $D$ translated from $W$ is $(O_F,u) \in \To$ with $u= e^{d/n} \cdot v = (e^{-w_1},e^{-w_2})$ and $ \|w\| = 10^{20}$ can be seen as the distance from $(O_F,1)$ to $D$.
 \end{example}

  \textbf{Input:}     
  \begin{itemize}
       \item  A basis for the lattice $O_F$: $\mathsf{a}_1 = (1,1)$ and $\mathsf{a}_2 = (-5 \time 10^{39}-0.5, 5 \time 10^{39}-0.5)$.
       \item  $v= |\Delta_F|^{-1/4} \cdot (e^{-w_1},e^{-w_2})$ and $(w_1, w_2)=(-\frac{10^{20}}{\sqrt{2}}, \frac{10^{20}}{\sqrt{2}})$.
       \item  $\delta \approx 10^{-5}$.
  \end{itemize}

We apply Algorithm \ref{alg} as follows.
\begin{arabiclist}
\item \textbf{Find a divisor $D'$ close to $D$ in $\Pic^0_F$.}\\                  
      The finite part of $W$ is $I=O_F$. As the notations in Section \ref{sec:jump}, $D_1$ is the zero divisor $(O_F,1)$, $D_2 = D$ and $u'=u$. We can skip part \ref{st1} and part \ref{st3} in Section \ref{sec:jump}. To find a divisor $D'$ obtained from some LLL reduction close to the $D$, it is sufficient to do part \ref{st2}, i.e., do Algorithm \ref{alg2}, as follows. 
       
     The smallest integer $t$ such that $n\cdot2^{-t}\cdot|w_i|<\log \partial_F $ for $i= 1, 2$ is $t = 61$. Let $(z_1, z_2) = 2^{-61} \cdot w \approx (-30.66587, 30.66587)$ and $\omega = (e^{-z_1},e^{-z_2}) $. 
        
   As described in Algorithm \ref{alg2}, denote by  $W_i=(O_F,\omega^{2^i})$ and $W_i = D'_i + (O_F, \omega_i)$ with $D'_i = d(J_i)$ a good divisor obtained from an LLL-reduction on $W_i$. By performing doubling and LLL-reduction 61 times, we can reach to $D$. The result can be seen in Table \ref{table1}.

In Table \ref{table1}, the second column contains the matrices $N_i$ for which $M_i = M_0\cdot N_i$ where $M_0$ and $M_i$ are the matrices of which columns form a basis of $O_F$ and $J_i$ respectively for all $ i= 1,2,...,61$. 

Let $J =J_{61} $ and $s=\omega_{61} \approx (e^{-0.80975}, e^{0.80975} )$. Then by choosing $D' = D'_{61}= d(J)$, we obtain that  $D = D' + (O_F, s) =  (J,s N(J)^{-1/n}) \in \Pic_F^0.$

\item \textbf{Apply Poisson summation formula.}\\
Since $D = (J,s N(J)^{-1/n}) \in \Pic_F^0$ and $e^{-d/n} \cdot N(J)^{-1/n} = (\co(J))^{-1/2}\approx 13.46966$, we obtain that $W= D+(O_F, e^{-d/n}) = \left(J, s (\co(J))^{-1/2} \right)$. The lattice associated to $W$ is $L= (\co(J))^{-1/n} s J$.

 \begin{alphlist}[(a)]
\item  $L$ has an LLL-reduced basis $\{b_{1}, b_2 \}$ with \\
 $b_{1}\approx(0.12124, 0.61234),  b_2 \approx (-1.57394, 0.29870)$. 
\item $b_{1}^* = b_1$, $b_{2}^* = (-1.57148, 0.31114)$,  $A_{1,1}=\|\mathsf{b}_1^*\|^2 \approx 0.38966 $ and $A_{2,2}=\|\mathsf{b}_2^*\|^2\approx 2.56635$, $A_{2,1}=\langle \mathsf{b}_2, b_1^* \rangle/\|b_1^*\|^2 \approx -0.02033$.

\item Since $\|\mathsf{b}_1\| < 1$ and $\|\mathsf{b}_2^*\| \ge 1$, we have $k=1$. Let $L_1 = \mathbb{Z} \cdot \mathsf{b}_1 \text{ and } L_2 = \mathbb{Z} \cdot \mathsf{b}_2$    as the notations in Section \ref{Poisson}. Then
$B=\left( \begin{array}{c}
        b_1
    \end{array} \right)
  =\left( \begin{array}{c}
          0.12124  \\
           0.61234 
              \end{array} \right)$.
\begin{romanlist}
   \item[(i)] $G=B^t \cdot B 
    =\left( \begin{array}{c}
            0.38966
                \end{array} \right)$ and
  $C=B \cdot G^{-1} =\left( \begin{array}{c}
            0.31115  \\
            1.57147
                \end{array} \right)$.
  
  \item[(ii)] $\mathsf{c}_1=(0.31115, 1.57147)$ is the column of $C$ and $\mathsf{c}_1^*=(0.31115, 1.57147)$, $C_{1,1}=\|\mathsf{c}_1^*\|^2=2.56635$.
  
  \item[(iii)] $\langle \mathsf{c}_1,\mathsf{b}_1^* \rangle =1 $  and $\langle \mathsf{c}_1,\mathsf{b}_2^* \rangle =0$.
  
  \item[(iv)] $Q_1(x_1, x_2) =    2.56635 x_1^2  + 2.56635 x_2^2$, 
  $Q_2(x_1, x_2) = - 0.02033 x_1 x_2 $ and \\
  $Q(x_1, x_2) =  Q_1(x_1, x_2)+ 2 Q_2(x_1, x_2) i$.\\
  Let $L'$ be the lattice associated to $Q_1(\mathsf{x})$.
  
\end{romanlist}

\end{alphlist}

\item \textbf{Find short vectors of the lattice $L'$.}
  \begin{alphlist}[(a)]
    \item 
    To approximate  $h^0(W)$ for quadratic fields with an error  $\delta \approx 10^{-5}$, it is sufficient to choose  $M =8$.
    \item The Fincke--Pohst method (Algorithm 2.12 in \cite{ref:40}) is used to find the list $\mathcal{L}_2$ of all columns vectors $\mathsf{x}=(x_1, x_2) \in \mathbb{Z}^2\backslash \{0,0\}$  such that $Q_1(\mathsf{x})= 2.56635 x_1^2  + 2.56635 x_2^2  \leq M $. Note that the function \texttt{qfminim} in  \texttt{pari-gp} can be used to find $\mathcal{L}_2$. Here $\mathcal{L}_2$ contains only 4 vectors (up to a sign).  
             $$ \mathcal{L}_2= \left( \begin{array}{cccc}
            0 & 1 & 1 & 1\\
            1 & 0 & 1 & -1
                \end{array} \right).$$

 By symmetry and since $Q(0,1) = Q(1,0)$, an approximate value of $h^0(W)$  is obtained as follows. 
 $$  h^0(W) \approx  \log{  \left(\frac{1}{\sqrt{0.38966}}   \left( 1+ 4 e^{-\pi Q(0,1)} + 2 e^{-\pi Q(1,1)} + 2 e^{-\pi Q(1,-1)} \right) \right)} $$
 $$  = 0.47250.$$         
    
   \end{alphlist}
   
\end{arabiclist}

    \textbf{Output:} $h^0(W) \approx 0.47250$ with an error $\delta = 10^{-5}$.

  Recall that the coset $\Pic_F^{(d)}$ is a circle containing the point $W$. Let $X=W$ and let $Y= W+ (O_F, v')$ be the points on $\Pic_F^{(d)}$ where $\log(v')=50 \cdot \mathsf{e}$. To see what $h^0$ looks like, we  compute $h^0$ at the points in the interval $[X,Y]$ on $\Pic_F^{d}$ and plot it. The points in this interval have corresponding translated divisors in the interval $[A, B] $ on $T^0$ where $A= \left(O_F,u \right)$ and $B= A+ \left(O_F,v'\right)$.
    
 First, the interval $[A, B]$ is divided into small intervals of length 1.  After that, we do LLL-reduction on the middle points of the small intervals to obtain good divisors. Let $S$ be the set of all good divisors obtained by this way. Then $S$ has 18 divisors in total (see Figure \ref{pic:Sn2}). 
  
 \begin{figure}[h]
    \centering
    \includegraphics[width=0.8\textwidth]{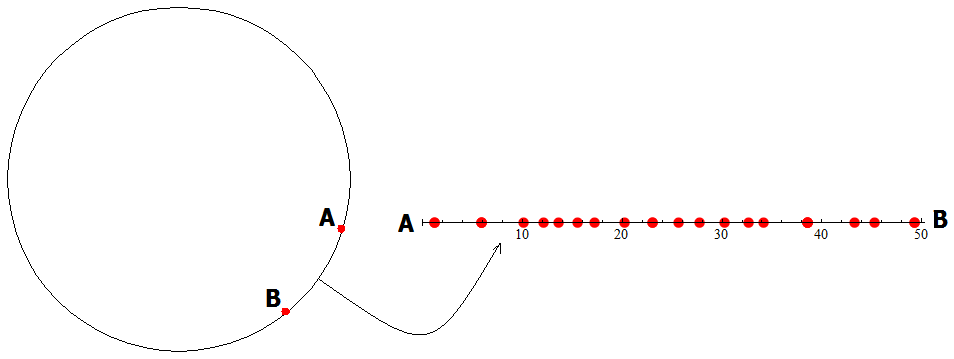}
    \caption{Good divisors in the interval $[A,B]$ on $\To$.}
    \label{pic:Sn2}
 \end{figure}

Now, let $W'$ be an arbitrary divisor in $[X, Y]$. Then its translated divisor $D$ is a divisor in $[A,B]$. We search for a good divisor $D'$ in $S$ which is the closest to $D$ and use $D'$ to compute $h^0(W')$.  Then $h^0$ is plotted in the interval $[X, Y]$ as in Figures \ref{pic:n2h2} below. In which the red points are divisors on $\Pic_F^{(d)}$ whose translated divisors are divisors in $S$. 
       
\begin{figure}[h]
   \centering
   \includegraphics[width=0.8\textwidth]{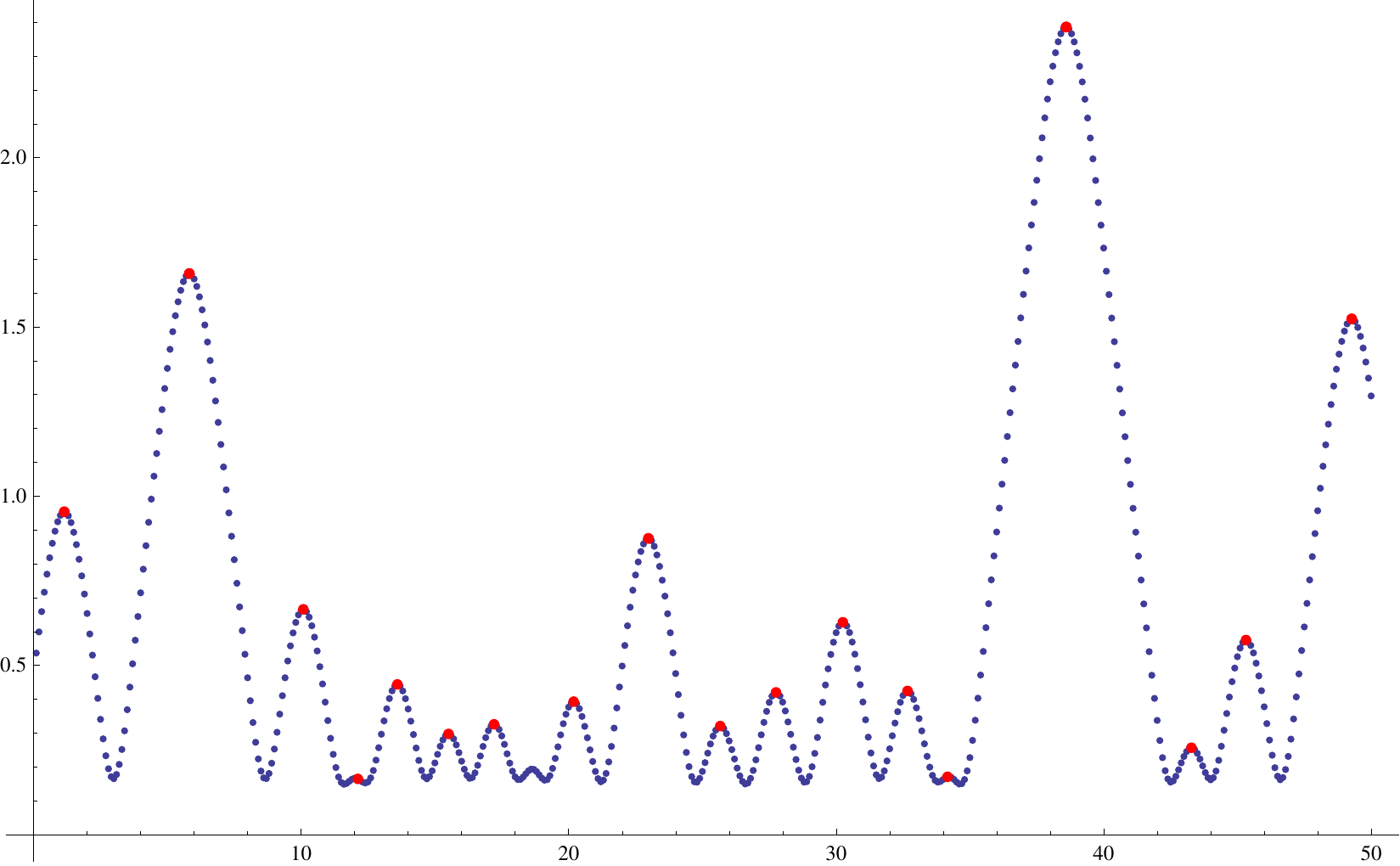}
   \caption{$h^0$ for the quadratic field $Q(\sqrt{10^{80}+129})$.}
   \label{pic:n2h2}
\end{figure}

\begin{example}\label{ex:2} 
  Let $P = X^3 - 88998X^2 - 1090173446X - 1000470997815$. Then $P$ is an irreducible polynomial with 3 real roots denoted by $\alpha_1 = \alpha, \alpha_2, \alpha_3$. Let $F = \mathbb{Q}(\alpha) $. Thus, $F$ is a real cubic field with 3 real infinite  primes
    $\sigma_i:  F \longrightarrow \mathbb{R}^3$ sending $\alpha$ to  $\alpha_i$ for $ i = 1, 2, 3$. The discriminant of $F$ is $\Delta_F = 10000820940380105429207549453 > 10^{28}$.
    
  We have $\left(\mathbb{R}^3\right)^0 = \{  (x_1,x_2,x_3) \in \mathbb{R}^3: x_1 + x_2 +  x_3 =0 \}$ a 2-dimensional subspace of $\mathbb{R}^3$ with an orthonormal basis $\{\mathsf{e}_1 = (\frac{1}{\sqrt{2}}, 0, -\frac{1}{\sqrt{2}}), \mathsf{e}_2 = (\frac{1}{\sqrt{6}}, -\frac{2}{\sqrt{6}}, \frac{1}{\sqrt{6}})\}$.
  
  The connected component of identity of $\Pic_F^0$ is the Dirichlet torus $\To = \left(\mathbb{R}^3\right)^0\slash \Lambda$ where $\Lambda$ is the lattice $ \{(\log{|\sigma_1(f)|},\log{|\sigma_2(f)|}, \log{|\sigma_3(f)|)}: f \in O_F^* \}$.

 Denote by 
 $w = 10^{10} \cdot \mathsf{e}_1 + 10^{10} \cdot \mathsf{e}_2$\\
 $ \approx (11153550716.50411, -8164965809.27726,$ $ -2988584907.22685) =(w_1, w_2, w_3) $. \\
 Let $W \in \Pic_F^{(d)}$ where $v= |\Delta_F|^{-1/6} \cdot \exp(w) =  |\Delta_F|^{-1/6} \cdot (e^{-w_1},e^{-w_2}, e^{-w_3})$. The divisor $D$ translated from $W$ is $(O_F,u) \in \To$ with $u= e^{d/n} \cdot v = (e^{-w_1},e^{-w_2}, e^{-w_3})$ and  $  \|w \| \approx 14142135623.73095$ can be considered as the distance from $(O_F,1)$ to $D$.
  \end{example}

    \textbf{Input:}     
    \begin{itemize}
         \item  A basis for the lattice $O_F$: $\mathsf{a}_1 = (1,1,1)$, \\
          $\mathsf{a}_2 \approx (-39667.47830, -30666.32509,70333.80339),\\
           \mathsf{a}_3 \approx (451263681.71217, -491488332.71468, 40224652.00251) $.
         \item  $v = |\Delta_F|^{-1/6}\cdot(e^{-w_1},e^{-w_2}, e^{-w_3})$ where\\
          $(w_1, w_2, w_3) \approx (11153550716.50411, -8164965809.27726, -2988584907.22685)$.
         \item  $\delta \approx 10^{-5}$.
    \end{itemize}

   We  compute $h^0(W)$ by using Algorithm \ref{alg}.
   
\begin{arabiclist}
\item \textbf{Find a divisor $D'$ close to $D$ in $\Pic^0_F$.}\\
As the notations in Section \ref{sec:jump}, $D_1 =(O_F,1)$ the zero divisor, $D_2 = D$ and $u'=u$. Similar to Example \ref{ex:1}, we can skip part \ref{st1} and part \ref{st3}. To find a divisor $D'$ obtained from some LLL reduction close to the $D$, it is sufficient to do part \ref{st2}, i.e., do Algorithm \ref{alg2}, as follows.

 We have that $t =30$  is the smallest integer for which $n\cdot2^{-t}\cdot|w_i|<\log \partial_F$ for $i= 1, 2, 3$. Let $(z_1, z_2, z_3) = 2^{-30}\cdot w \approx (10.38755, -7.60422, -2.78333) $ and $\omega = (e^{-z_1},e^{-z_2},e^{-z_3}) $.              
  
 As described in Algorithm \ref{alg2}, denote by  $W_i=(O_F,\omega^{2^i})$ and $W_i = D'_i + (O_F, \omega_i)$ with $D_i' = d(J_i)$, a good divisor obtained from an LLL-reduction on $D_i$.
      
      In Table \ref{table2}, the second column contains the matrices of which columns form an LLL-reduced basis for the lattices $J_i$ for all $ i= 1,2,...,30$. The coordinates of these vectors are computed with respect to the basis of $O_F$.

  Let $J =J_{30} $ and $s=\omega_{30} \approx (e^{1.09384}, e^{-0.61605}, e^{-0.47780} )$. Then by choosing $D' = D'_{30}= d(J)$, we obtain that $D= D' +(O_F,s)= (J,s N(J)^{-1/n}) \in \Pic_F^0.$

  \item \textbf{Apply Poisson summation formula.}\\
  Since $D = (J,s N(J)^{-1/n}) \in \Pic_F^0$ and $e^{-d/n} \cdot N(J)^{-1/n} = (\co(J))^{-1/2}$, it follows that $W= D+(O_F, e^{-d/n}) = \left(J, s (\co(J))^{-1/2} \right)$. 
   Here $N(J)^{-1} = 938139713086$ and  $\co(J) \approx 106.59831$.
   The lattice associated to $W$ is $L= (\co(J))^{-1/n} sJ$.

  \begin{alphlist}[(a)]
  \item  $L$ has an LLL-reduced basis $\{\mathsf{b}_{1}, \mathsf{b}_2, \mathsf{b}_3 \}$ with\\
  $\mathsf{b}_{1} \approx (0.07064, 0.39051, 0.34009), \mathsf{b}_2 \approx (0.83795, 0.53263,-0.87506),$\\
  $\mathsf{b}_3 \approx (1.35900, -0.65069, 0.22025)$. 
  \item $b_{1}^* = b_1$, 
  $b_{2}^* = (0.84581, 0.57611, -0.83720)$ and\\
   $b_{3}^* = (1.09497, -0.72624, 0.60648 )$.\\
    $A_{1,1}=\|\mathsf{b}_1^*\|^2 \approx 0.27314 $, $A_{2,2}=\|\mathsf{b}_2^*\|^2\approx 1.74820$, $A_{3,3}=\|\mathsf{b}_3^*\|^2\approx 2.09420$ and 
    $A_{2,1}=\langle \mathsf{b}_2, b_1^* \rangle/\|b_1^*\|^2 \approx -0.11133$,
    $A_{3,1}=\langle \mathsf{b}_3, b_1^* \rangle/\|b_1^*\|^2 \approx -0.30460$ and    
    $A_{3,2}=\langle \mathsf{b}_3, b_2^* \rangle/\|b_2^*\|^2 \approx 0.33761$.  
  
 Let $L_1 = \mathbb{Z} \cdot \mathsf{b}_1  \text{ and } L_2 = \mathbb{Z} \cdot \mathsf{b}_2 \oplus \mathbb{Z} \cdot \mathsf{b}_3$. 
    
  \item Since $\|\mathsf{b}_1\| < 1$ and $\|\mathsf{b}_2^*\| \ge 1$, we have $k=1$ and 
  $B=\left( \begin{array}{c}
          b_1
      \end{array} \right)
    =\left( \begin{array}{c}
            0.07064\\ 0.39051\\ 0.34009
      \end{array} \right)$.
   \begin{romanlist}
      \item[(i)] $G=B^t \cdot B 
          =\left( \begin{array}{c}
                  0.27314
                      \end{array} \right)$ and
        $C=B \cdot G^{-1} =\left( \begin{array}{c}
                  0.258609 \\
                  1.42968\\
                  1.24508
                      \end{array} \right)$.
       \item[(ii)] $\mathsf{c}_1=(0.258609, 1.42968, 1.24508)$ is the column of $C$ and\\ $\mathsf{c}_1^*=(0.258609, 1.42968, 1.24508)$, $C_{1,1}=\|\mathsf{c}_1^*\|^2=3.66108$.
       
       \item[(iii)] $\langle \mathsf{c}_1,\mathsf{b}_1^* \rangle =1 $  and $\langle \mathsf{c}_1,\mathsf{b}_2^* \rangle=\langle \mathsf{c}_1,\mathsf{b}_3^* \rangle =0$.
         
       \item[(iv)] Denote by $Q_1(x_1, x_2, x_3) = 3.66108 x_1^2 + 1.74820(x_2 + 0.33761 x_3)^2 + 2.09420 x_3^2$, 
           $Q_2(x_1, x_2, x_3) = 0.11133 x_1 x_2 + 0.30460 x_1 x_3$
            and  $ Q(x_1, x_2, x_3) = Q_1(x_1, x_2, x_3) + 2 Q_2(x_1, x_2, x_3) i$.\\
            Let $L'$ be the lattice associated to $Q_1(\mathsf{x})$.

   \end{romanlist} 
 
  \end{alphlist}

 \item \textbf{Find short vectors of the lattice $L'$.}
  \begin{alphlist}[(a)]
      \item By Proposition \ref{esterr}, to approximate  $h^0(W)$ for cubic fields with an error  $\delta \approx 10^{-5}$, it is sufficient to choose  $M =9$.
      \item The Fincke--Pohst method (Algorithm 2.12 in \cite{ref:40}) is used to find the list $\mathcal{L}_2$ of all column vectors $\mathsf{x}=(x_1, x_2, x_3) \in \mathbb{Z}^3\backslash \{0,0,0\}$  such that $ Q_1(\mathsf{x}) \leq M $. Here $\mathcal{L}_2$ has 17 vectors (up to a sign) obtained by using the function \texttt{qfminim} in  \texttt{pari-gp}.
           $$ \mathcal{L}_2= \left( \begin{array}{cccccccccccccccccc}
           0 & 0 & 0 & 0 &  0 &  0 &  0 & 0 & 1 & 1 &  1 & 1 & 1 & 1 &  1 &  1 &  1\\
           1 & 2 & 0 & 1 & -1 & -2 & -1 & 0 & 0 & 1 & -1 & 0 & 1 &-1 &  0 &  1 & -1\\
           0 & 0 & 1 & 1 &  1 &  1 &  2 & 2 & 0 & 0 &  0 & 1 & 1 & 1 & -1 & -1 & -1
              \end{array} \right).$$

 By symmetry, an approximate value of $h^0(W)$  is obtained as follows. 
 $$  h^0(W) \approx  \log{  \left( \frac{1}{\sqrt{ 0.27314}} \left(1+ 2 \sum_{\mathsf{x} \in \mathcal{L}_2}e^{-\pi Q(\mathsf{x})}\right) \right)} = 0.65882.$$       
       
  \end{alphlist}     
   
\end{arabiclist}

\textbf{Output:} $h^0(W) \approx 0.65882$ with error $\delta = 10^{-5}$.

 Putting $v_1= 1$, $v_2= \exp{(6 \cdot\mathsf{e}_1)}$, $v_3= \exp{(6 \cdot\mathsf{e}_1+ 8\cdot \mathsf{e}_2)}$ and $v_4= \exp{(8\cdot \mathsf{e}_2)}$. 
 Let $X_i= W+ (O_F, v_i)$ be the points on $\Pic_F^{(d)}$ for $i=1,2,3, 4$. 
 Similar to Example \ref{ex:1}, to see what $h^0$ looks like, we compute $h^0$ at the points in the box $X_1 X_2 X_3 X_4$ on $\Pic_F^{d}$ and plot it. 
 The points in this box have corresponding translated divisors in the rectangle $ABCE$ on $T^0$. Here $A=(O_F,u), B= A+ (O_F, v_2), C= A+ (O_F, v_3)$ and  $E= A+ (O_F, v_4)$ .  
    
 \begin{figure}[h] 
     \centering
     \includegraphics[width=0.3\textwidth]{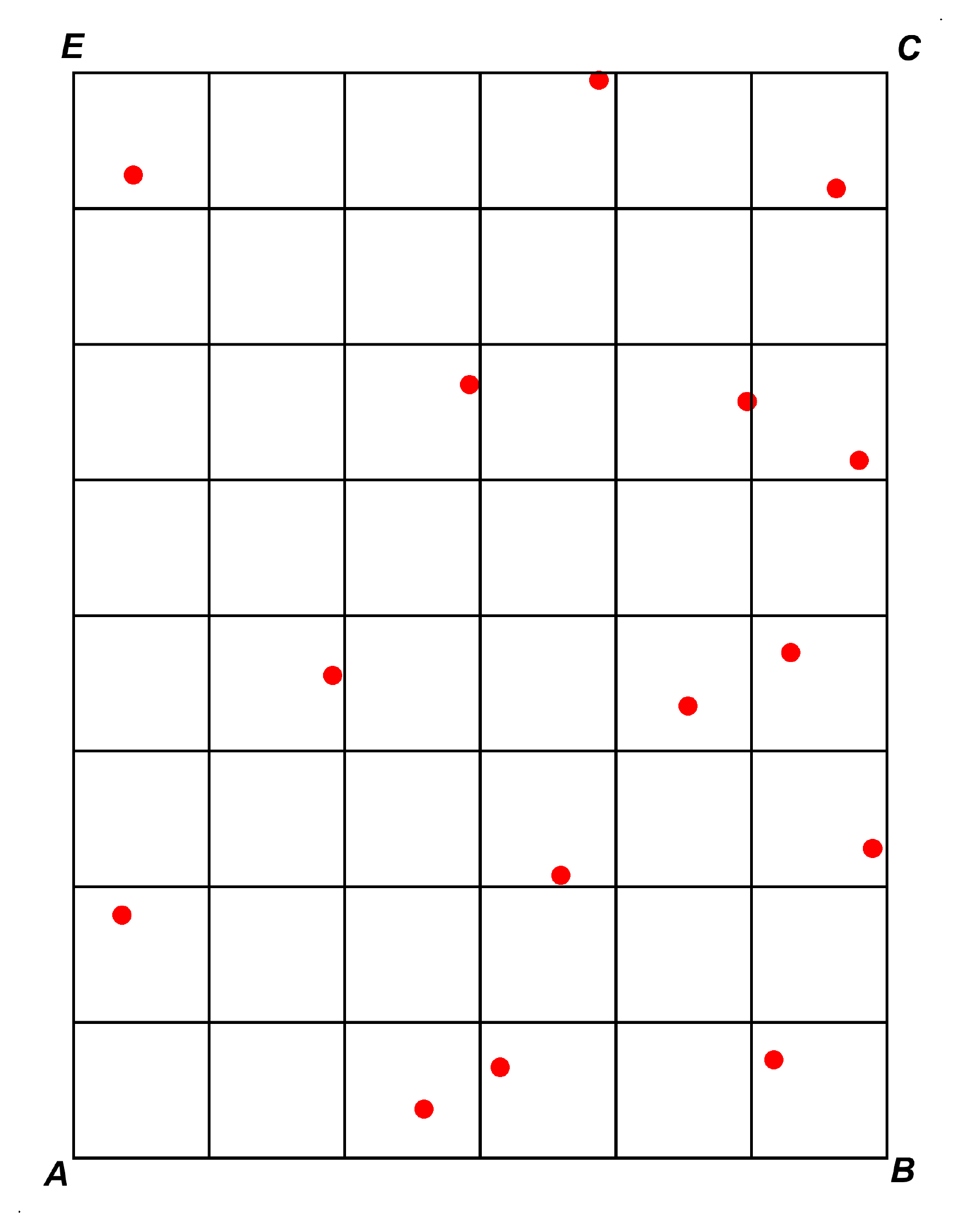}
     \caption{The set $S$ of good divisors in the rectangle $ABCE$.}
      \label{pic:h0}
\end{figure}

  The rectangle $ABCE$ is divided into small squares, each one has sides of length 1. After that, we perform LLL-reduction at the center of such squares to obtain good divisors. Let $S$ be the set of all good divisors obtained by this way. Then $S$ has 15 points in total (see Figure \ref{pic:h0}). 
      
  Now, let $W'$ be an arbitrary divisor in $X_1 X_2 X_3 X_4$. Then its translated divisor $D$ is a divisor in $ABCE$. We search for a good divisor $D'$ in $S$ which is the closest to $D$ and use $D'$ to compute $h^0(W')$.  Then $h^0$ is plotted as in Figure \ref{pic:h1} and \ref{pic:h2} below 
 in which the red points are divisors on $\Pic_F^{(d)}$ whose translated divisors are good divisors in $S$. In Figure \ref{pic:h1}, the dark color area is corespondent to the large values of $h^0$.

 \begin{figure}[h]
       \centering
       \includegraphics[width=0.8\textwidth]{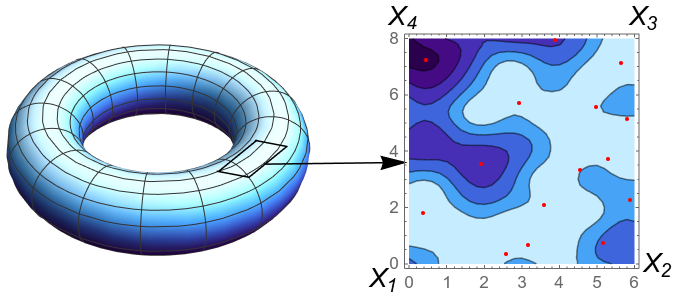}
        \caption{The contour plot of $h^0$ in the box $X_1 X_2 X_3 X_4$.}
         \label{pic:h1}
 \end{figure}

\begin{figure}[h]
   \centering
   \includegraphics[width=0.6\textwidth]{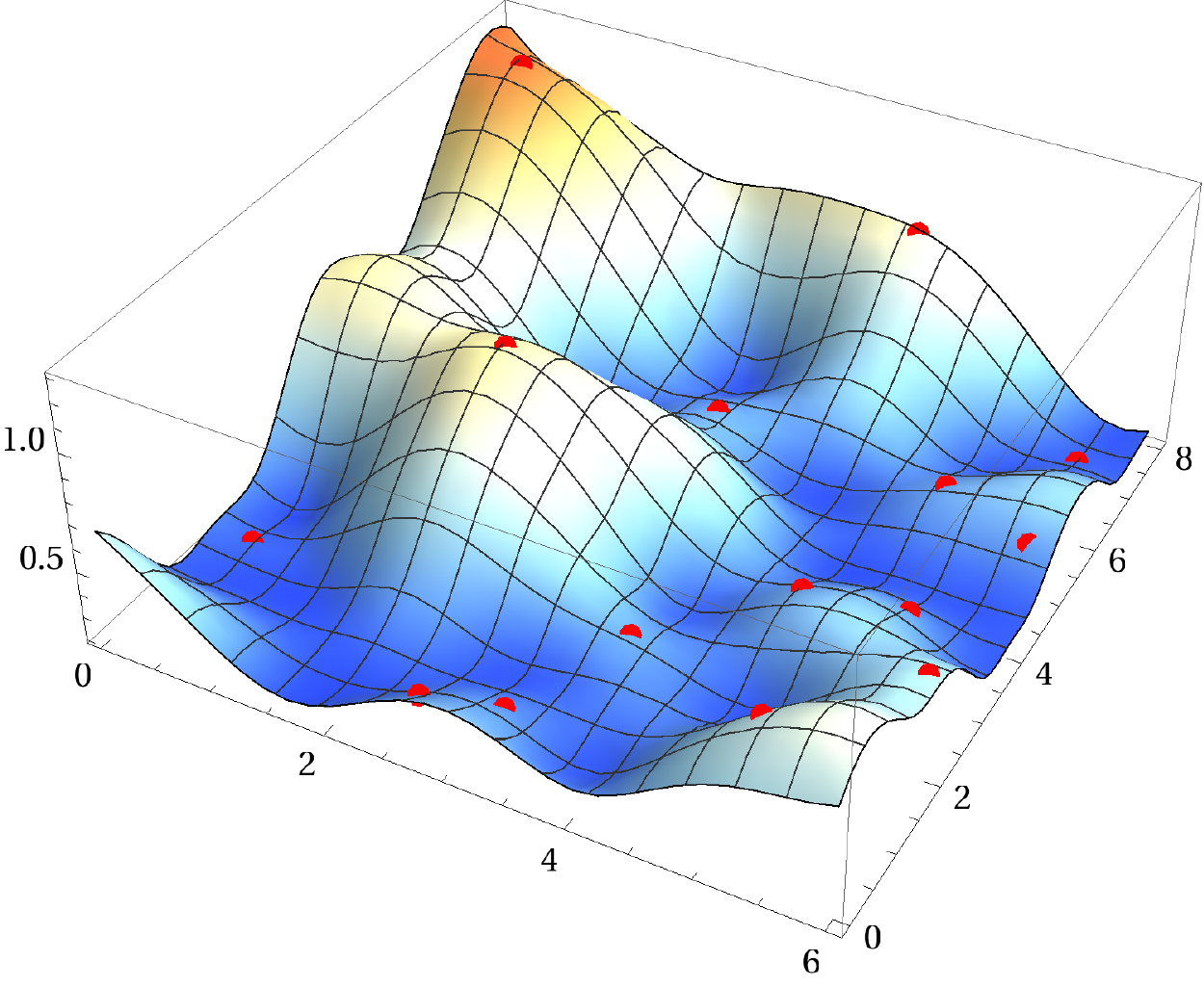}
   \caption{$ h^0$ for the real cubic field $F$.}
    \label{pic:h2}
\end{figure}

\begin{landscape}
        \captionof{table}{}
        \label{table1}
         \resizebox{1.5\textwidth}{!}{
               \begin{tabular}{|c c c c|} 
               	\hline 
               	$i$ & $J_i$ & $N(J_i^{-1})$ & $\log{\omega_i}$ \\ [0.5ex] 
               
               	\hline
               	\hspace*{0.5cm} & \hspace*{0.5cm} & \hspace*{0.5cm}& \hspace*{0.5cm} \\
               	0 & $ O_F$  & 1 & $(-30.66587, 30.66587)$ \\ 
               	
               	\hline
               	\hspace*{0.5cm} & \hspace*{0.5cm} & \hspace*{0.5cm}& \hspace*{0.5cm} \\
               	1 & $\frac{1}{N(J_1^{-1})} \left( \begin{array}{cc}    	
               	129 & 5\cdot 10^{39}+64 \\
               	0 & -1  \\   	
               	\end{array} \right)$ & 129 & $(29.03491,-29.03491)$ \\
               	
               	\hline
               	\hspace*{0.5cm} & \hspace*{0.5cm} & \hspace*{0.5cm}& \hspace*{0.5cm} \\
               	2 & $\frac{1}{N(J_2^{-1})} \left( \begin{array}{cc}
               	129 & 5\cdot 10^{39}+25 \\
               	0 & 1  \\
               	\end{array} \right)$ & 129  & $(-32.29683,32.29683)$ \\
               	
               	\hline
               	\hspace*{0.5cm} & \hspace*{0.5cm} & \hspace*{0.5cm}& \hspace*{0.5cm} \\
               	3 & $\frac{1}{N(J_3^{-1})} \left( \begin{array}{cc}
               	2146689 & 5\cdot 10^{39}+261160 \\
               	0 & -1  \\
               	\end{array} \right)$ & 2146689 & $(25.77299,-25.77299)$ \\
               	\hline
               	
               	\hspace*{0.5cm} & \hspace*{0.5cm} & \hspace*{0.5cm}& \hspace*{0.5cm} \\
               	\vdots &  \vdots &  \vdots&  \vdots \\ [1ex] 
               	\hspace*{0.5cm} & \hspace*{0.5cm} & \hspace*{0.5cm}& \hspace*{0.5cm} \\
               	\hline

               	\hspace*{0.5cm} & \hspace*{0.5cm} & \hspace*{0.5cm}& \hspace*{0.5cm} \\
               	60 & $ \left( \begin{array}{cc}
               	1 & -\frac{3535142278155418356810286962804039637657}{N(J_{60}^{-1})}  \\ 
               	\hspace*{0.5cm} &  \hspace*{0.5cm} \\
               	    	0 & -\frac{1}{N(J_{60}^{-1})}  \\
               	\end{array} \right)$ & 19902657321594605283368410 59638321226594  & $(0.64301,-0.64301)$ \\
               	\hline

               	\hspace*{0.5cm} & \hspace*{0.5cm} & \hspace*{0.5cm}& \hspace*{0.5cm} \\
               	61 & $ \left( \begin{array}{ccc}
               1 & -\frac{4637851969685051510247636695057652362226}{N(J_{61}^{-1})}  \\ 
               		\hspace*{0.5cm} &  \hspace*{0.5cm} \\
               	0 & \frac{1}{N(J_{61}^{-1})}  \\
               	\end{array} \right)$ & 742409068975056334669660076059992618001    & $(0.80975,-0.80975)$ \\
               	\hline
               \end{tabular}
               }
 \end{landscape}

\begin{landscape}
      \captionof{table}{}
      \label{table2}
       \resizebox{1.38\textwidth}{!}{
        	\begin{tabular}{|c c c c|} 
        		\hline
        		$i$ & $J_i$ & $N(J_i^{-1})$ & $\log{\omega_i}$ \\ [0.5ex] 
        		\hline 
        		
        		0 & $ O_F$  & 1 & $(10.38755, -7.60422, -2.78334)$ \\ 
        		\hline
        		\hspace*{0.5cm} & \hspace*{0.5cm} & \hspace*{0.5cm}& \hspace*{0.5cm} \\
        		1 & $\frac{1}{N(J_1^{-1})} \left( \begin{array}{ccccc}
        		691582920399 & 118502077239 & -227460374946\\  		
        		0 & 40389624 & 42667155\\  		
        		0 & 3277 & -13661\end{array} \right)$ & 691582920399 & $(1.05601, -1.5469, 0.49083)$ \\
        		\hline

        		\hspace*{0.5cm} & \hspace*{0.5cm} & \hspace*{0.5cm}& \hspace*{0.5cm} \\
        		2 & $\frac{1}{N(J_2^{-1})} 
        		\left( \begin{array}{ccccc}
        		222208932162 & 82987136358 & 68645452464 \\
        	  	0 & 19963734 & 42669156 \\
        	  	0 & 4774 &  -927\end{array} \right)$ & 222208932162 & $(-0.42917,- 0.37727, 0.80643)$ \\
        		\hline

        		\hspace*{0.5cm} & \hspace*{0.5cm} & \hspace*{0.5cm}& \hspace*{0.5cm} \\
       			3 & $\frac{1}{N(J_3^{-1})} 
       		  		\left( \begin{array}{ccccc}
       		  		4204248079595 & 1034415034603 & 307388467818 \\
       		  	  	0 & 60087131 & 218999921 \\
       		  	  	0 & 14593 &  -16782\end{array} \right)$ & 4204248079595 & $(0.03212, 0.32161,- 0.35373)$ \\
        		\hline  		
        	    		
       			\hspace*{0.5cm} & \hspace*{0.5cm} & \hspace*{0.5cm}& \hspace*{0.5cm} \\
       			\vdots &  \vdots &  \vdots&  \vdots \\ [1ex] 
       			\hspace*{0.5cm} & \hspace*{0.5cm} & \hspace*{0.5cm}& \hspace*{0.5cm} \\
       			 \hline

        		\hspace*{0.5cm} & \hspace*{0.5cm} & \hspace*{0.5cm}& \hspace*{0.5cm} \\
        		29 & $\frac{1}{N(J_{29}^{-1})} \left( \begin{array}{ccccc}
        		3063324517380 & 662309451492 & -144083620104 \\
        		0 & 132428244 & 106307652\\
        		0 & 10287 & -14874 \end{array} \right)$ & 3063324517380 & $(-0.99628, -0.39485, 1.3911)$ \\ [1ex] 
        		\hline

        		\hspace*{0.5cm} & \hspace*{0.5cm} & \hspace*{0.5cm}& \hspace*{0.5cm} \\
        		30 & $\frac{1}{N(J_{30}^{-1})} \left( \begin{array}{ccccc}
        		  		938139713086 & -448140560830 & 27116000512 \\
        		  		0 & 3374186 & -90561028\\
        		  		0 & 10388 & -773 \end{array} \right)$ & 938139713086 & $(1.09384, -0.61605, -0.47780)$ \\ [1ex] 
        		\hline

      \end{tabular}
        	
        	}
\end{landscape}

\section*{Acknowledgement}
I would like to thank  Ren\'{e} Schoof for proposing the Poisson summation formula for computing $h^0$ and for very valuable comments. The author also would like to thank the reviewers for their insightful comments that helped improve the manuscript. I am also immensely grateful to Dave Karpuk and Jaana Suviniitty for their useful comments. 

This research was supported by the Universit\`{a} di Roma ``Tor Vergata" and partially supported by the Academy of Finland (grants  $\#$276031, $\#$282938, and $\#$283262). The support from the European Science Foundation under the COST Action IC1104 is also gratefully acknowledged.


\end{document}